\documentclass{article}
\usepackage{amsmath, amssymb, amsthm}
\usepackage{mathrsfs}
\usepackage{graphicx}
\usepackage{tikz}
\usepackage[top=2.5cm, bottom=2.5cm, left=2.5cm, right=2.5cm]{geometry}

\newcommand{\be}{\begin{eqnarray}}
	\newcommand{\ben}{\begin{eqnarray*}}
		\newcommand{\en}{\end{eqnarray}}
	\newcommand{\enn}{\end{eqnarray*}}
\newcommand{\Z}{{\mathbb Z}}

\newcommand{\C}{{\mathbb C}}
\newcommand{\R}{{\mathbb R}}
\newcommand{\G}{{\Gamma}}
\newtheorem{definition}{Definition}
\newtheorem{theorem}{Theorem}
\newtheorem{lemma}{Lemma}
\newtheorem{assumption}{Assumption}
\newtheorem{remark}{Remark}

\begin{document}
\title{Limiting absorption principle for time-harmonic elastic scattering of plane waves from diffraction gratings}
\author{Jianli Xiang\thanks{Three Gorges Mathematical Research Center, College of Mathematics and Physics, China Three Gorges University, Yichang 443002, China. Email: \texttt{xiangjianli@ctgu.edu.cn}} \quad
	and Guanghui Hu\thanks{Corresponding author, School of Mathematical Sciences and LPMC, Nankai University, Tianjin 300071, China. Email: \texttt{ghhu@nankai.edu.cn}}}
\date{}
\maketitle
\begin{abstract}
  We establish the limiting absorption principle for time-harmonic elastic scattering of plane waves by a periodic rigid diffraction grating. By perturbing the frequency with a small positive imaginary part, we regularize the ill-posed problem at propagative wavenumbers (that is, when uniqueness fails under the classical Rayleigh expansion condition) and characterize the limiting solution via a singular perturbation result from functional analysis. The limiting solution satisfies the original scattering problem together with an additional constraint that ensures uniqueness. Both incident pressure and shear waves are considered, and the same constraint condition is obtained in both cases. The results provide a rigorous selection mechanism for physically admissible solutions at resonance frequencies. Our framework extends naturally to the Neumann (cavity) boundary condition as well as other transmission conditions, in particular when guided waves exist in periodic structures.
\end{abstract}

\noindent\textbf{Keywords:} Limiting absorption principle, linear elasticity, periodic structure, plane wave, uniqueness and existence, Rayleigh expansion.

\noindent\textbf{MSC (2020):}
 35B30, 35J05, 35P25, 35R35, 74B05, 74J20.

\section{Introduction}

The scattering of acoustic, electromagnetic and elastic waves by periodic structures has been a subject of intensive research over the past several decades, owing to its wide range of applications in physics and engineering, including diffraction gratings, photonic and phononic crystals, surface acoustic wave devices, seismic exploration, and non-destructive testing. Periodic structures such as phononic crystals and metamaterials exhibit unique wave propagation properties, including band gaps, negative refraction, and waveguiding, which have attracted considerable attention in both theoretical and experimental studies. The propagation of elastic waves in periodic media is also of fundamental importance in geophysics, where layered and periodically structured geological formations significantly affect seismic wave propagation~\cite{AkiRichards}. In addition, surface acoustic wave (SAW) devices, which rely on the interaction of elastic waves with periodic gratings, are widely used in modern telecommunications and sensor technologies~\cite{Morgan}. The mathematical modeling and analysis of such scattering problems not only provide a deeper understanding of the underlying physics but also offer rigorous theoretical foundations for the design and optimization of practical devices.

The rigorous mathematical analysis of elastic wave scattering by a periodic rigid surface began with the work of Arens~\cite{Arens1999,ArensT1999} for two-dimensional diffraction gratings and~\cite{Arens2001,Arens2002} for more general rough surfaces. In particular,~\cite{Arens1999} established existence and uniqueness of quasi-periodic solutions to the Dirichlet problem for grating profiles given by smooth $(C^2)$ periodic functions, using the boundary integral equation method. Subsequently, Elschner and Hu in~\cite{EH2010,EH2012} investigated the same Dirichlet problem in general Lipschitz domains via the variational approach. It was shown in~\cite{EH2010,EH2012} that, for either an incident pressure or shear wave, a quasi-periodic solution to the equivalent variational formulation always exists. Moreover, uniqueness is guaranteed if the grating profile is given by a Lipschitz graph in $\mathbb{R}^2$ or $\mathbb{R}^3$.
However, all the aforementioned well-posedness results are established under the assumption that the quasi-periodic momentum is not a propagative wavenumber, that is, under the so-called non-resonance condition (see Def. \ref{def1} (ii)). At resonance frequencies, where the quasi-momentum coincides with a propagative wavenumber, the Fredholm alternative fails to guarantee uniqueness, and the Rayleigh expansion condition alone is insufficient to single out a unique physically admissible solution. Such resonance phenomena are ubiquitous in elastic wave propagation in periodic media and play a crucial role in the design of waveguides, filters, and energy harvesting devices. Therefore, a rigorous understanding of the resonance scattering mechanism is of both theoretical and practical importance.

The limiting absorption principle (LAP) provides a powerful and systematic framework for selecting the physically correct solution at resonance frequencies. The fundamental idea of the LAP is to introduce a small artificial absorption into the medium, which renders the otherwise ill-posed stationary problem well-posed, and then to recover the physical solution by passing to the limit as the absorption parameter tends to zero. This principle is deeply rooted in the physical intuition that any realistic medium exhibits some degree of dissipation, and the physically admissible solution should be obtained as the limiting case of vanishing dissipation. The LAP was first developed in the context of quantum scattering theory and has since become a standard tool in the analysis of wave propagation in unbounded domains; see, e.g., the classical works of Eidus~\cite{Eidus} and Vainberg~\cite{Vainberg}. For open waveguide scattering, the LAP and its connection to radiation conditions have been extensively studied by Nosich~\cite{Nosich1994}, who clarified the relationship between the limiting absorption principle, the limiting amplitude principle, and the physical radiation conditions in open waveguides. The asymptotic behavior and radiation conditions for time-harmonic waves in periodic waveguides were further investigated by Fliss and Joly~\cite{FJ16}, who provided a rigorous characterization of the outgoing wave fields in such structures. More recently, Hoang~\cite{Hoang2011} established the LAP for acoustic scattering in semi-infinite periodic waveguides.

The works \cite{Kirsch2023,KL18} by Kirsch and Lechleiter developed a general framework for the LAP in scattering by an open periodic layer, deriving a radiation condition that characterizes the outgoing nature of the scattered field. An abstract singular perturbation lemma was established in ~\cite{Kirsch2023} to characterize the limiting solution via an additional constraint condition when the underlying operator is not invertible at the limit, which serves as a key ingredient in the present work. In the context of acoustic and electromagnetic scattering of plane waves by periodic structures, the LAP has been further investigated in~\cite{HK23,HuKiZh}, where a comprehensive analysis of the limiting absorption principle for plane wave scattering of periodic structures is presented.
For elastic wave scattering, however, the application of the limiting absorption principle presents additional challenges compared to the scalar acoustic case or the electromagnetic case. The Navier system is inherently more complex due to the coupling of compressional and shear waves, each with different wave speeds, which leads to a vectorial nature of the problem and a more intricate DtN map. Moreover, the variational formulation involves a sesquilinear form that is not as straightforward as in the scalar case, and the derivation of the limiting constraint condition requires a careful analysis of the self-adjointness and positivity of the associated frequency-independent operators. Unlike the acoustic or electromagnetic cases, where the limiting constraint condition often takes a relatively simple form, the elastic case yields an integral identity that couples both components of the displacement field through the Lam\'e parameters and the incident angle.

In this work, we establish the limiting absorption principle for time-harmonic elastic scattering of plane waves by a rigid diffraction grating in $\mathbb{R}^2$. The grating interface is assumed to be a $2\pi$-periodic non-selfintersecting Lipschitz curve, and the medium above it is homogeneous and isotropic.
Even in such a simple background setting, surface waves cannot usually be ruled out.
We treat both incident pressure waves and incident shear waves separately, and show that in both cases the limiting absorption process yields a unique solution, provided that an additional constraint condition is imposed when the quasi-periodic momentum is a propagative wavenumber. Remarkably, the two cases give rise to the same integral constraint condition (see Theorems~\ref{THP} and \ref{THS}), which takes the form of an orthogonality relation over the infinite periodic cell and involves the Lam\'e constants and the kernal space to the homogeneous scattering problem. This constraint condition serves as a natural selection mechanism for the physically admissible solution at resonance.

The main results of this paper are summarized as follows. In Section~\ref{sec:classical}, we recall the classical formulation of the scattering problem and the variational approach, together with the existing well-posedness results under the non-resonance assumption. In Section~\ref{sec:LAP}, we present the limiting absorption principle for the elastic grating problem. We first introduce the abstract functional-analytic lemma (Lemma~\ref{LAP}) and then verify its assumptions for the elastic operator. The pressure and shear wave cases are treated separately in Subsections~\ref{subsec:pressure} and~\ref{subsec:shear}, respectively, where we derive the additional constraint condition in each case and establish the main existence and uniqueness results (Theorem~\ref{THP} and Theorem~\ref{THS}).

\section{Scattering of elastic waves from a rigid periodic surface: classical results}\label{sec:classical}
\subsection{Problem formulation}
Let the profile of the diffraction grating be given by a non-selfintersecting Lipschitz curve $\Lambda \subset \mathbb{R}^2$, which is supposed to be $2\pi$-periodic in $x_{1}$. Let $D$ be the unbounded domain above $\Lambda$. We assume the region $D$ is filled with an isotropic, homogenous elastic medium characterized by the Lam\'e constants $\lambda$, $\mu$ satisfying $\mu>0$, $\lambda+\mu>0$. For simplicity the mass density of the elastic medium will be normalized to be one.
We assume that a time-harmonic elastic plane wave $u^{\text {in}}$ with incident angle $\theta\in(-\pi/2,\pi/ 2)$ is incident onto $\Lambda$ from above. The incoming wave can be either an incident pressure wave taking the form
\begin{equation*}
u^{\text{in}}=u_p^{\text {in}}(x)=\hat{\theta} \exp \left(i k_p \hat{\theta}\cdot x\right) \quad \text { with } \hat{\theta}:=(\sin\theta,-\cos\theta),
\end{equation*}
or an incident shear wave of the form
\begin{equation*}
u^{\text{in}}=u_s^{\text{in}}(x)=\hat{\theta}^{\perp} \exp \left(i k_s \hat{\theta} \cdot x\right) \quad \text { with } \hat{\theta}^{\perp}:=(\cos\theta,\sin\theta) .
\end{equation*}
Here,
\begin{equation*}
k_p:=\omega/ \sqrt{2\mu+\lambda}, \quad k_s:=\omega/\sqrt{\mu}
\end{equation*}
are the compressional and shear wave numbers, respectively, and $\omega>0$ stands for the angular frequency of the harmonic motion.

The propagation of the time-harmonic total elastic wave in $D$ is governed by the Navier equation (or system)
\begin{equation} \label{a0}
\left(\Delta^*+\omega^2\right)u=0 \text { in } D, \quad \Delta^*:=\mu\Delta+(\lambda+\mu) \text { grad\,div},
\end{equation}
where $u=u^{\text{in}}+u^{\text{sc}}$ is the total displacement field and $u^{\text{sc}}$ denotes the scattered field.  Moreover, we require that the total field satisfies the rigid (Dirichlet) boundary condition
\begin{equation} \label{a1}
u=0 \quad {\rm on} \quad \Lambda .
\end{equation}
The periodicity of the structure, together with the form of the incident waves, implies that the solution $u$ must be quasiperiodic with phase-shift $\alpha$ (or $\alpha$-quasiperiodic), i.e.
\begin{equation} \label{a2}
u\left(x_1+2 \pi, x_2\right)=\exp \left(2i\alpha\pi x_1\right) u\left(x_1,x_2\right), \quad\left(x_1,x_2\right) \in D,
\end{equation}
where quasi-periodic momentum $\alpha$ depends on the form of the incident plane wave as follows
\be\label{alpha}
\alpha:=\left\{\begin{array}{lll}
k_p\sin\theta\quad&&\mbox{if}\quad u^{\text{in}}=u^{\text{in}}_p,\\ [2pt]
k_s \sin \theta \quad&&\mbox{if}\quad u^{\text{in}}=u^{\text{in}}_p.
\end{array}\right.
\en
To ensure well-posedness of the boundary value problem \eqref{a0}-\eqref{a2}, a radiation condition must be imposed as $x_2 \rightarrow+\infty$. First we note that the scattered field $u^{\text{sc}}$, which also satisfies the Navier equation \eqref{a0}, can be decomposed in $D$ as
\begin{equation} \label{a3}
u^{\text{sc}}=\frac{1}{i}({\rm grad}\, \varphi+\overrightarrow{{\rm curl}}\, \psi) \quad \text { with } \varphi:=-\frac{i}{k_p^2} \,{\rm div}\, u^{\text{sc}}, \quad \psi:=\frac{i}{k_s^2} \, {\rm curl}\, u^{\text{sc}},
\end{equation}
where the two curl operators in $\mathbb{R}^2$ are defined by
\begin{equation*}
{\rm curl}\, u:=\partial_1 u_2-\partial_2 u_1, \quad u=\left(u_1,u_2\right)^{\top} \quad {\rm and }\quad \overrightarrow{{\rm curl}}\, v:=\left(\partial_2 v,-\partial_1 v\right)^{\top} ,
\end{equation*}
and the scalar functions $\varphi, \psi$ satisfy the homogeneous Helmholtz equations
\begin{equation*}
\left(\Delta+k_p^2\right) \varphi=0 \quad {\rm and} \quad \left(\Delta+k_s^2\right) \psi=0 \quad {\rm in }~D .
\end{equation*}
Here and in the following the notation $\partial_j v=\partial v/\partial x_j$ is used. Note that the above relations follow from the well-known decomposition of the scattered field $u^{{\rm sc}}$ into its compressional and shear parts,
\begin{equation*}
u^{{\rm sc}}=u_p+u_s, \quad u_p:=-\frac{1}{k_p^2}\, {\rm grad \, div}\, u^{{\rm sc}}, \quad u_s:=\frac{1}{k_s^2}\, \overrightarrow{{\rm curl}}\, {\rm curl}\, u^{{\rm sc}},
\end{equation*}
and the fact that $u^{{\rm sc}}$ satisfies equation \eqref{a0}.

Now, as $\varphi$ and $\psi$ are $\alpha$-quasiperiodic solutions to the Helmholtz equation in the unbounded domain $D$, we impose the classical outgoing wave condition of the Helmholtz equation on them. For $x_2>\Lambda^{+}:=\max\limits_{\left(x_1, x_2\right) \in \Lambda} x_2$, we assume that $\varphi$, $\psi$ have upward $\alpha$-quasiperiodic Rayleigh expansions of the form
\begin{equation*}
\varphi(x)=\sum_{n \in \mathbb{Z}} A_{p, n} \exp \left(i \alpha_n x_1+i \beta_n x_2\right), \quad \psi(x)=\sum_{n \in \mathbb{Z}} A_{s, n} \exp \left(i \alpha_n x_1+i \gamma_n x_2\right),
\end{equation*}
where the constants $A_{p,n}$, $A_{s,n}\in\mathbb{C}$ are called Rayleigh coefficients and
\begin{equation*}
\alpha_n:=\alpha+n, \quad \beta_n:= \begin{cases}
\sqrt{k_p^2-\alpha_n^2} & \text { if }\left|\alpha_n\right| \leq k_p, \\ i \sqrt{\alpha_n^2-k_p^2} & \text { if }\left|\alpha_n\right|>k_p,
\end{cases}
\end{equation*}
and $\gamma_n$ is defined analogously as $\beta_n$ with $k_p$ replaced by $k_s$. It follows from \eqref{a3} that the two components of the scattered field $u^{{\rm sc}}=(u_1^{{\rm sc}}, u_2^{{\rm sc}})$ in $D$ can be represented as
\begin{equation*}
u_1^{{\rm sc}}=\frac{1}{i}\left(\partial_1 \varphi+\partial_2 \psi\right), \quad u_2^{{\rm sc}}=\frac{1}{i}\left(\partial_2 \varphi-\partial_1 \psi\right).
\end{equation*}
Therefore, we finally obtain a corresponding expansion of $u^{{\rm sc}}$ into outgoing plane elastic waves:
\begin{equation} \label{a4}
u^{{\rm sc}}(x)=\sum_{n \in \mathbb{Z}}\left\{A_{p,n}\left(\begin{array}{c}
	\alpha_n \\   \beta_n \end{array}\right) e^{i\alpha_n x_1+i\beta_n x_2}+A_{s, n} \left(\begin{array}{c}
	\gamma_n \\  -\alpha_n \end{array}\right) e^{i\alpha_n x_1+i\gamma_n x_2}\right\},
\end{equation}
for $x_2>\Lambda^{+}$. This is the classical radiation condition used in the literature. Since $\beta_n$ and $\gamma_n$ are real for at most a finite number of indices, only a finite number of plane waves in \eqref{a4} propagate into the far field, with the remaining evanescent waves (or surface waves) decaying exponentially as $x_2 \rightarrow+\infty$. The above expansion converges uniformly with all derivatives in the half-plane $\left\{x\in \mathbb{R}^2: x_2 \geq b\right\}$, for any $b>\Lambda^{+}$, and the Rayleigh coefficients are uniquely determined by the Fourier coefficients $u_n$ of the function $\exp\left(-i\alpha x_1\right)u^{{\rm sc}}\left(x_1,b\right)$:
\begin{equation}\label{un}
u_n=D_n\left(\begin{array}{c}
		A_{p,n} \exp \left(i \beta_n b\right) \\
		A_{s,n} \exp \left(i \gamma_n b\right)
\end{array}\right), \quad D_n:=\left(\begin{array}{cc}
		\alpha_n & \gamma_n \\
		\beta_n & -\alpha_n
\end{array}\right) .
\end{equation}
Note here that ${\rm det}\,D_n=-\left(\alpha_n^2+\beta_n \gamma_n\right)\neq 0$ for all $n \in \mathbb{Z}$. Our diffraction problem can now be formulated as the following boundary value problem.
\begin{description}
	\item[
Dirichlet problem (DP):] Given a grating profile curve $\Lambda \subset \mathbb{R}^2$ (which is $2\pi$-periodic in $x_1$) and an incident field $u^{{\rm in}}$ of the form $u_{p}^{{\rm in}}$ or $u_{s}^{{\rm in}}$, find a vector function $u=u^{{\rm in}}+u^{{\rm sc}} \in [H_{{\rm loc}}^1(D)]^2$ that satisfies \eqref{a0}-\eqref{a2} and the radiation condition \eqref{a4}.
\end{description}

While the Dirichlet boundary condition and a single incident wave are adopted here for clarity, our analysis applies equally to other boundary conditions and to the superposition of multiple incident elastic plane waves (see Remarks \ref{rem2} and \ref{rem3}).

\subsection{Variational formultion in the non-resonant regime}
Following the variational approach in \cite{EH2010}, we recall the equivalent variational formulation of the boundary value problem (DP), which is posed in a truncated bounded periodic cell in $\mathbb{R}^2$ and is enforcing the radiation condition on the artificial boundary. Introduce an artificial boundary
\begin{equation*}
\Gamma_b:=\left\{\left(x_1, b\right): 0 \leq x_1 \leq 2 \pi\right\}, \quad b>\Lambda^{+},
\end{equation*}
and the truncated bounded domain
\begin{equation*}
\Omega_b=\Omega_{\Lambda, b}:=\left\{\left(x_1, x_2\right) \in D: 0<x_1<2 \pi, x_2<b\right\},
\end{equation*}
lying between the segment $\Gamma_b$ and one period of the grating profile curve which we denote by $\Lambda$ again (see Figure \ref{fig}).
\begin{figure}[h]
  \centering
  \begin{tikzpicture}
  \filldraw[black] (0,0) circle (0.8pt);
  \draw (-0.2,0.25) node{$O$};
  \draw[thick,->] (-4.5,0)--(8.5,0);
  \draw (8.7,0) node{$x_{1}$};
  \draw[thick,->] (0,-0.5)--(0,4);
  \draw (0.35,3.85) node{$x_{2}$};
  \draw[gray,thick] (0,3)--(4,3);
  \draw (4.35,3) node{$\Gamma_b$};
  \draw (-0.15,3) node{$b$};
  \draw[gray,thick,dashed] (4,0)--(4,4);
  \draw (4,-0.25) node{$2\pi$};
  \draw[gray,thick,dashed] (-4,0)--(-4,4);
  \draw (-4,-0.25) node{$-2\pi$};
  \draw[gray,thick,dashed] (8,0)--(8,4);
  \draw (8,-0.25) node{$4\pi$};
  \draw[black,thick, smooth, domain=0:2*pi, samples=300, variable=\t]
	plot ({2*\t/pi + 0.6*sin(2*\t r) - 4},
	{1.2 + 0.6*sin(2*\t r + 0.5)});
  \draw[black,thick, smooth, domain=0:2*pi, samples=300, variable=\t]
	plot ({2*\t/pi + 0.6*sin(2*\t r)},
	{1.2 + 0.6*sin(2*\t r + 0.5)});
  \draw (3.8,0.8) node[blue]{$\Lambda$};
  \draw[black,thick, smooth, domain=0:2*pi, samples=300, variable=\t]
	plot ({2*\t/pi + 0.6*sin(2*\t r) + 4},
	{1.2 + 0.6*sin(2*\t r + 0.5)});
  \fill[cyan!20, opacity=0.5, smooth, domain=0:2*pi, samples=300, variable=\t]
	plot ({2*\t/pi + 0.6*sin(2*\t r) - 4},
	{1.2 + 0.6*sin(2*\t r + 0.5)})
	-- (0,0) -- (-4,0) -- cycle;
  \fill[cyan!20, opacity=0.5, smooth, domain=0:2*pi, samples=300, variable=\t]
	plot ({2*\t/pi + 0.6*sin(2*\t r)},
	{1.2 + 0.6*sin(2*\t r + 0.5)})
	-- (4,0) -- (0,0) -- cycle;
  \fill[cyan!20, opacity=0.5, smooth, domain=0:2*pi, samples=300, variable=\t]
	plot ({2*\t/pi + 0.6*sin(2*\t r) + 4},
	{1.2 + 0.6*sin(2*\t r + 0.5)})
	-- (8,0) -- (4,0) -- cycle;
  \draw (2,2.2) node{$\Omega_b$};
  \draw[red!70] (-3.5,4.2) node[above]{incident wave};
  \draw[red!70, thick, ->] (-4.0,4.2) -- (-2.0,3.2);
  \draw[red!70, thick, ->] (-4.0+0.5,4.2) -- (-2.0+0.5,3.2);
  \draw[red!70, thick, ->] (-4.0-0.5,4.2) -- (-2.0-0.5,3.2);
  \draw[blue!70, thick, domain=20:160, samples=50, variable=\ang]
  plot ({1.0*cos(\ang) + 2}, {1.0*sin(\ang) + 1.8});
  \draw[blue!70, thick, domain=20:160, samples=50, variable=\ang]
  plot ({1.6*cos(\ang) + 2}, {1.6*sin(\ang) + 1.8});
  \draw[blue!70, thick, domain=20:160, samples=50, variable=\ang]
  plot ({2.2*cos(\ang) + 2}, {2.2*sin(\ang) + 1.8});
  \draw[blue!70] (5.5,3.5) node[above]{scattered wave};	
  \end{tikzpicture}
  \caption{Illustration of the diffraction problem for a plane wave incidence and $\Gamma_b$ , $\Omega_b$, $\Lambda$. The interface is a non-selfintersecting Lipschitz rigid curve which may admit surface waves to propagate along the $x_1$-direction.  }\label{fig}
\end{figure}
Let $H_\alpha^1\left(\Omega_b\right)$ denote the Sobolev space of scalar functions over $\Omega_b$ which are $\alpha$-quasiperiodic with respect to $x_1$. We introduce the energy space
\begin{equation*}
V_\alpha\left(\Omega_b\right):=\left\{u\in [H_\alpha^1\left(\Omega_b\right)]^2: \left.u\right|_{\Lambda}=0\right\}.
\end{equation*}
In the following $V_\alpha\left(\Omega_b \right)$ will be equipped with the norm in the usual Sobolev space $[H_\alpha^1\left(\Omega_b \right)]^2$ of vector functions.
We still need the following periodic spaces and the trace spaces on $\G_b$:
\ben
&&	V_{{\rm per}}\left(\Omega_b\right):=\left\{u\in [H^1\left(\Omega_b\right)]^2: u(x_{1},b) ~{\rm is }~ 2\pi\text{-periodic in }x_{1} \text{ and } \left.u\right|_{\Lambda}=0\right\},\\
&&	[L_{{\rm per}}^{2}(\Omega_{b})]^{2}:=\left\{u\in [L^2\left(\Omega_b\right)]^2: u(x_{1},b) ~{\rm is }~ 2\pi\text{-periodic in }x_{1} \right\},\\
&&[H_{\alpha}^{1/2}(\Gamma_{b})]^2:=\left\{f\in [H^{1/2}(\Gamma_b)]^2: e^{-i\alpha x_{1}}f(x_{1}) ~{\rm is }~ 2\pi\text{-periodic in }x_{1} \right\}, \\
&&[H_{{\rm per}}^{1/2}(\Gamma_{b})]^2:=\left\{f\in [H^{1/2}(\Gamma_b)]^2: f(x_{1}) ~{\rm is }~ 2\pi\text{-periodic in }x_{1} \right\}.
\enn
Next we introduce the sesquilinear form $B(u,\varphi)$ defined over quasi-periodic spaces by
\begin{equation*}
\widetilde{B}_{\omega}(u,\varphi):=\int_{\Omega_b}\left(E(u, \overline{\varphi})-\omega^2 u \cdot \overline{\varphi}\right) \mathrm{d} x-\int_{\Gamma_b} \overline{\varphi} \cdot \mathscr{\widetilde{T}}_{\omega} u \mathrm{~d} s \quad \text{for all }u, \phi \in V_{\alpha}\left(\Omega_b\right),
\end{equation*}
with the quasi-periodic Dirichlet-to-Neumann (DtN) map $\mathscr{\widetilde{T}}_{\omega}u :=\mathscr{\widetilde{T}}_{\omega} \left(\left.u\right|_{\Gamma_b}\right)$ on the artificial boundary, and
\begin{align*}
E(v,u):=&(2\mu+\lambda)\Big(\frac{\partial v_{1}}{\partial x_{1}}\frac{\partial u_{1}}{\partial x_{1}}+\frac{\partial v_{2}}{\partial x_{2}}\frac{\partial u_{2}}{\partial x_{2}}\Big)+\mu\Big(\frac{\partial v_{1}}{\partial x_{2}}\frac{\partial u_{1}}{\partial x_{2}}+\frac{\partial v_{2}}{\partial x_{1}}\frac{\partial u_{2}}{\partial x_{1}}\Big) \\
&+\lambda\Big(\frac{\partial v_{1}}{\partial x_{1}}\frac{\partial u_{2}} {\partial x_{2}}+\frac{\partial v_{2}}{\partial x_{2}}\frac{\partial u_{1}}{\partial x_{1}}\Big)+\mu\Big(\frac{\partial v_{1}}{\partial x_{2}}\frac{\partial u_{2}}{\partial x_{1}}+\frac{\partial v_{2}}{\partial x_{1}}\frac{\partial u_{1}}{\partial x_{2}}\Big).
\end{align*}
Note that, the frequency-dependent map $\mathscr{\widetilde{T}}_{\omega}$ takes the form (see \cite{EH2010})
\begin{equation*}
(\mathscr{\widetilde{T}}_{\omega} w)(x_{1})=-\sum_{n\in\mathbb{Z}} W_n(\omega) \widetilde{w}_n \exp \left(i\alpha_n x_1\right),\quad w(x_{1})=\sum_{n\in\mathbb{Z}}\widetilde{w}_n \exp\left(i\alpha_n x_1\right) \in[H^{1/2}_{\alpha}(\Gamma_{b})]^{2},
\end{equation*}
where
\begin{equation*}
W_n(\omega):=\frac{1}{i}\left(\begin{array}{cc}
	\omega^{2}\beta_n/d_{n} & 2\mu\alpha_{n}-\omega^{2}\alpha_{n}/d_{n} \\
	-2\mu\alpha_{n}+\omega^{2}\alpha_{n}/d_{n} & \omega^{2}\gamma_n/d_{n}
\end{array}\right), \quad  d_{n}:=\alpha_{n}^{2}+\beta_n\gamma_{n}.
\end{equation*}

Introduce the stress vector or traction $Tu$:
\begin{equation*}
Tu:=2\mu \partial_{\nu}u+\lambda~\nu~\nabla\cdot u+\mu\,\nu\;\overrightarrow{{\rm curl}}~u 
=\left(\begin{array}{ccc}(\lambda+2\mu)\frac{\partial u_{1}}{\partial x_{1}}+\lambda \frac{\partial u_{2}}{\partial x_{2}}
&\mu\Big(\frac{\partial u_{1}}{\partial x_{2}}+\frac{\partial u_{2}}{\partial x_{1}}\Big)\\
\mu\Big(\frac{\partial u_{1}}{\partial x_{2}}+\frac{\partial u_{2}}{\partial x_{1}}\Big)
&\lambda\frac{\partial u_{1}}{\partial x_{1}}+(\lambda+2\mu)\frac{\partial u_{2}}{\partial x_{2}}  \end{array}\right)\nu,
\end{equation*}
where $\nu$ denotes the exterior unit normal on the boundary of $\Omega_{b}$. Moreover, we have
\begin{equation*}
Tu=2\mu\partial_{2} u+\lambda\left(\begin{array}{c}0 \\1 \end{array}\right)(\partial_{1} u_{1}+\partial_{2}u_{2})+\mu\left(\begin{array}{c}1 \\ 0 \end{array}\right)(\partial_{1} u_{2}-\partial_{2}u_{1}) \quad {\rm on} \quad \Gamma_{b}.
\end{equation*}

Applying Betti's identity to a solution $u=u^{{\rm sc}}+u^{{\rm in}}$ of (DP), one obtains
\begin{equation*}
(Tu)|_{\Gamma_b}=[T\left(u^{{\rm sc}}+u^{{\rm in}}\right)]|_{\G_b}=
\mathscr{\widetilde{T}}_{\omega}(u^{{\rm sc}}|_{\G_b})+(T u^{{\rm in }})|_{\G_b} =\mathscr{\widetilde{T}}_{\omega} (u|_{\G_b})+g_0,
\end{equation*}
with
\ben
g_0:=(T u^{{\rm in }})|_{\G_b} -\mathscr{\widetilde{T}}_{\omega} (u^{{\rm in}}|_{\G_b}).
\enn
Hence, the variational formulation of (DP) reads as follows: Find $u\in V_\alpha\left(\Omega_b \right)$ such that
\begin{equation} \label{var0}
\widetilde{B}_{\omega}(u,\varphi)=\int_{\Gamma_b} f_0 \cdot\overline{\varphi}\, \mathrm{d}s \quad \text{for all }\varphi \in V_\alpha\left(\Omega_b\right) .
\end{equation}
Here
\begin{equation*}
g_0=g_{p,0}:=\frac{2i\beta_0 k_p(\lambda+2 \mu)}{d_0}\left(\begin{array}{l}
		-\alpha \\  \gamma_0
\end{array}\right) \exp\left(i\alpha x_1-i\beta_0 b\right),\quad  \alpha:=k_p\sin\theta
\end{equation*}
for an incident pressure wave of the form $u_{p}^{{\rm in}}=\hat{\theta} \exp \left(i k_p \hat{\theta}\cdot x\right)$, and
\begin{equation}\label{shear}
g_0=g_{s,0}:=-\frac{2i\gamma_0 k_s \mu}{d_0}\left(\begin{array}{c}
		\beta_0 \\  \alpha
\end{array}\right) \exp\left(i\alpha x_1-i\gamma_0 b\right),\quad \alpha:=k_s \sin \theta
\end{equation}
for an incident shear wave of the form $u_{s}^{{\rm in}}=\hat{\theta}^{\perp} \exp \left(i k_s \hat{\theta} \cdot x\right)$.

Define the periodic DtN map $\mathscr{T}_{\omega}$ on the artificial boundary $\Gamma_{b}$ by
\begin{equation*}
(\mathscr{T}_{\omega} v)(x_{1},b)=-\sum_{n\in\mathbb{Z}} W_n(\omega) v_n e^{i n x_1},\quad v(x_{1})=\sum_{n\in\mathbb{Z}}v_n e^{i n x_1} \in[H^{1/2}_{{\rm per}}(\Gamma_{b})]^{2}.
\end{equation*}
Defining $v:=e^{-i\alpha x_{1}}u$ and $\psi:=e^{-i\alpha x_{1}}\varphi$, we get the periodic form of the variational formulation for the incident pressure wave as follows: find $v\in V_{{\rm per}}\left(\Omega_b\right)$ such that
\be\label{Va-p}
B_{\omega}(v,\psi)=F_\omega(\psi) \quad \text{for all }\psi\in V_{{\rm per}}\left(\Omega_b\right),
\en
where now
\begin{align*}
B_{\omega}(v,\psi):=&\int_{\Omega_b}\left(E(v, \overline{\psi})-(\omega^2-\mu\alpha^{2})v \cdot \overline{\psi}-2i\alpha\mu\frac{\partial v}{\partial x_{1}}\cdot\overline{\psi}\right) \mathrm{d} x-\int_{\Gamma_b} \overline{\psi} \cdot \mathscr{T}_{\omega}v \mathrm{~d} s \\
&-i\alpha(\lambda+\mu)\int_{\Omega_b}\left[\left(\frac{\partial v_{2}}{\partial x_{2}}+2\frac{\partial v_{1}}{\partial x_{1}}\right)\overline{\psi}_{1}+ \frac{\partial v_{1}}{\partial x_{2}}\overline{\psi}_{2}\right] \mathrm{d}x +(\lambda+\mu)\int_{\Omega_b}\alpha^{2}v_{1}\overline{\psi}_{1}\mathrm{d}x.
\end{align*}
For incident pressure waves,  we have $\alpha=k_p\sin\theta$ and
\be\label{R-p}
F_\omega(\psi)=\frac{2i\beta_0 k_p(\lambda+2 \mu)}{d_0}e^{-i\beta_0 b} \int_{0}^{2\pi} \left(\begin{array}{l}
 -\alpha \\  \gamma_0 \end{array}\right) \cdot\overline{\psi(x_{1},b)}\,\mathrm{d}x_{1}\quad \text{for all }\psi\in V_{{\rm per}}\left(\Omega_b\right).
\en
The periodic variational formulation for the incident shear waves can be formulated analogously by changing the right hand side of \eqref{R-p} and using $\alpha=k_s\sin\theta$.

To proceed, we introduce the concepts of cut-off values and propoagative wavenumbers for the Navier equation.
\begin{definition} \label{def1}
(i) $\hat{\alpha}\in[-1/2,1/2]$ is called a cut-off value if there exists $\ell\in\mathbb{Z}$ that $|\hat{\alpha}+\ell|=k_{p}$ or $|\hat{\alpha}+\ell|=k_{s}$.
	
(ii) $\hat{\alpha}\in[-1/2,1/2]$ is called a propagative wave number if there exists a non-trivial $\phi\in [H_{{\rm loc}}^1(D)]^2$ such that
\begin{equation*}
(\Delta^*+\omega^2)\phi=0 ~~{\rm in }~~ D, \quad\quad \phi=0 ~~{\rm on }~~ \Lambda,
\end{equation*}
and $\phi$ satisfies the upward $\hat{\alpha}$-quasiperiodic Rayleigh expansion \eqref{a4} with $\alpha:=\hat{\alpha}$.
\end{definition}

In Definition \ref{def1} we restrict the quasi-periodic parameter $\hat{\alpha}$ to the interval $[-1/2,1/2]$, because an $\alpha$-quasi-periodic function must be also $(\alpha+n)$-quasi-periodic for any $n\in\mathbb{N}$. The cut-off values depends on the incident angle and frequency, while the propagative wavenumbers rely on both the incoming wave and the grating structure. The curves $\hat{\alpha}\mapsto \omega(\hat{\alpha})$ with $\hat{\alpha}\in[-1/2,1/2]$  constitute the so-called dispersion relations for the Navier equation in an open periodic waveguide. We rewrite the quasi-periodic momentum $\alpha$ given by \eqref{alpha} as
\be\label{ha}
\alpha=\ell+\hat{\alpha}\quad\mbox{for some}\quad \ell\in \Z\quad\mbox{and}\quad \hat{\alpha}\in[-1/2, 1/2].
\en
We call \( \alpha \in \mathbb{R} \) a cut-off value or a propagative wavenumber whenever \( \hat{\alpha} \) is of the same type.

The periodic variational form can be equivalently written as the operator equation (see \cite{EH2010})
\be\label{operator}
 (I-K_\omega)v=f_\omega, \quad v, f_\omega\in V_{\rm per}(\Omega_b),
\en
where $K_\omega$ is compact and depends analytically on $\omega$ such that $\hat{\alpha}=\hat{\alpha}(\omega)$ is not a cut-off value. Below we collect some well-posed results (see e.g. \cite{EH2010}):
\begin{itemize}
  \item[(i)] By the analytical Fredholm theory, the inverse operator $(I-K_\omega)^{-1}$ exists for all $\omega\in \R_+$ except for a countable number of frequencies with the only accumulating point at infinity.
 	
  \item[(ii)] If $\hat{\alpha}(\omega)$ is not a propagative wavenumber (that is, uniqueness holds true), $(I-K_\omega)^{-1}$ always exists by the Fredholm alternative.
 	
  \item[(iii)] If $\hat{\alpha}(\omega)$ happens to be a propagative wavenumber, there exists at least one solution to the variational formulation \eqref{var0}, because $F_\omega$ is always orthogonal to the kernel space $\mathcal{N}(I-K_\omega)^*$ of the adjoint operator.
 	
  \item[(iv)] If $\Lambda$ is a Lipschitz graph, then propagative wavenumbers does not exist and thus the operator $I-K_{\omega}$ is always invertible. If the grating profile $\Lambda$ is given by a general Lipschitz curve, one can prove well-posedness for sufficiently small frequencies $\omega$.
\end{itemize}

In the third case (iii), a general solution to $(I-K_\omega)v=f_\omega$ takes the form
\be\label{general}
v=v_0+\sum_{j=1}^m c_j\;\phi_j, \quad \phi_j\in \mathcal{N}(I-K_\omega),  \quad m=\mbox{dim}\, \mathcal{N}(I-K_\omega)<\infty,
\en
where $c_j\in \C$ are arbitrary constants and $v_0\in V_{\rm per}(\Omega_b)$ is a particular solution to $(I-K_\omega)v_0=f_\omega$. The purpose of this work is to investigate the limiting absorption principle in the resonance case: when the quasi-momentum $\alpha$ is a propagative number for some incidence frequency $\omega\in \R_+$, we perturb $\omega$ by a small imaginary part with $\omega+i\epsilon$ and  investigate the convergence of the unique solution $v_\epsilon$ of the periodic variational formulation as $\epsilon \rightarrow 0^+$. The limiting solution leads to a special solution to the original diffraction problem, which satisfies an additional constraint condition in addition to the Rayleigh expansion condition.
In the remaining part of this paper, we  make the following assumption on the incident angle and frequency.
\begin{assumption}\label{assumption}
It holds that $|\alpha+\ell|\neq k_{p}$ and $|\alpha+\ell|\neq k_{s}$ for every $\ell\in\mathbb{Z}$, where $\alpha\in \R$ is given by $\eqref{alpha}$. In other words, the number $\hat{\alpha}$ defined by \eqref{ha} is not a cut-off value.	
\end{assumption}

Under the Assumption \ref{assumption}, the null space
$\mathcal{N}(I-K_\omega)$ only consists of exponentially decaying surface waves; see Lemma \ref{Lem-p} (i) and
Lemma \ref{Lem-s} (i).

\section{Limiting Absorption Principle in the resonance case}\label{sec:LAP}
The aim of this section is to prove well-posedness of the elastic scattering throught the Limiting Absorption Principle (LAP) in the resonance case, that is, when $\alpha$ happens to be some propgative wavenumber.
The fundamental idea of LAP is to regularize an ill-posed stationary problem (e.g., the lossless operator equation \eqref{operator} when $\alpha(\omega)$ happens to be some propagative wavenumber) by incorporating a small, physically motivated absorption term, thereby ensuring well-posedness, and subsequently recovering a physically admissible solution via a limiting process as the absorption parameter tends to zero. In this work, we perturb $\omega$ by appending a small positive imaginary part, $\omega+i\epsilon$, and then consider the limiting solution as $\epsilon\rightarrow 0^+$. The convergence and stability of this limiting procedure is guaranteed by the following abstract result from functional analysis \cite{Kirsch2023}.

\begin{lemma} \label{LAP}
Let $I=(0,\epsilon_0)$ for some small $\epsilon_0>0$. Let $K(\epsilon)$ be compact operators from some Hilbert space $X$ into itself and $f(\epsilon) \in \mathcal{R}(L(\epsilon))$ for all $\epsilon\in[0, \epsilon_0)$ where $L(\epsilon):=I-K(\epsilon)$. Furthermore, let $L(\epsilon)$ be one-to-one (thus invertible) for all $\epsilon \in I$ and let $L(0)=I-K(0)$ have Riesz number one. Let $P: X\rightarrow \mathcal{N}:=N(L(0))$ be the projection onto the nullspace of $L(0)$ along the direct decomposition $X=\mathcal{N} \oplus \mathcal{R}$ where $\mathcal{R}=\mathcal{R}(L(0))$. Finally, let $f(\epsilon)$ and $K(\epsilon)$ be continuously differentiable functions in $\epsilon \in[0,\epsilon_0)$ and let $P L^{\prime}(0)|_{\mathcal{N}}$ be an isomorphism from $\mathcal{N}$ onto itself where $L^{\prime}(0)$ denotes the one-sided derivative of $L(\epsilon)$ at $\epsilon=0^{+}$.
Then the mapping $\epsilon \mapsto v(\epsilon):=[L(\epsilon)]^{-1} f(\epsilon)$ has a continuous extension to a mapping from $[0, \epsilon_0)$ into $X$. The limit $v(0)=\lim \limits_{\epsilon \rightarrow 0^{+}} v(\epsilon)$ is the unique solution of the system
\begin{equation}\label{limit}
L(0) v(0)=f(0), \quad\left[P L^{\prime}(0)\right] v(0)=P f^{\prime}(0)
\end{equation}
where $f^{\prime}(0)$ denotes the one-sided derivative of $f(\epsilon)$ at $\epsilon=0$. Moreover, there exist $\delta \in\left(0, \epsilon_0\right)$ and $c>0$ such that
\begin{equation*}
\left\|v\left(\epsilon_1\right)\right\|_X \leq c\left[\sup _{\epsilon \in[0, \delta]}\|f(\epsilon)\|_X+\sup _{\epsilon \in[0, \delta]}\left\|f^{\prime}(\epsilon)\right\|_X\right] \quad \text { for all } \epsilon_1 \in[0, \delta] .
\end{equation*}
\end{lemma}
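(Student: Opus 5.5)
The plan is a Lyapunov--Schmidt reduction with respect to the Riesz decomposition $X=\mathcal N\oplus\mathcal R$. Riesz number one gives $\mathcal N(L(0)^2)=\mathcal N(L(0))$, so $X=\mathcal N\oplus\mathcal R$ is a topological direct sum, both pieces are closed, the projection $P$ is bounded, and $L(0)$ restricted to $\mathcal R$ is an isomorphism of $\mathcal R$ onto itself. Put $Q:=I-P$. I expand
\[
L(\epsilon)=L(0)+\epsilon L'(0)+\epsilon R(\epsilon),\qquad f(\epsilon)=f(0)+\epsilon f'(0)+\epsilon r(\epsilon),
\]
where $\|R(\epsilon)\|\to0$ and $\|r(\epsilon)\|\to0$ by continuous differentiability. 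This uses that $L'(\epsilon)=-K'(\epsilon)$ is continuous in operator norm. Write $v(\epsilon)=n(\epsilon)+w(\epsilon)$ with $n=Pv\in\mathcal N$ and $w=Qv\in\mathcal R$. Since $f(0)\in\mathcal R$, there is a unique $w_0\in\mathcal R$ with $L(0)w_0=f(0)$.

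Next I project the equation $L(\epsilon)v=f(\epsilon)$ onto the two components. Because $L(0)n=0$ and $PL(0)=0$, the $P$-component can be divided by $\epsilon$:
\[
P\big(L'(0)+R(\epsilon)\big)(n+w)=Pf'(0)+Pr(\epsilon).
\]
The $Q$-component reads
\[
L(0)w+\epsilon Q\big(L'(0)+R(\epsilon)\big)(n+w)=f(0)+\epsilon Q\big(f'(0)+r(\epsilon)\big).
\]
Together these form a block system $\mathcal A(\epsilon)(n,w)=(Pf'(0)+Pr,\ f(0)+\epsilon Q(f'+r))$ on $\mathcal N\times\mathcal R$. At $\epsilon=0$ the block operator is
\[
\mathcal A(0)=\begin{pmatrix} PL'(0)|_{\mathcal N} & PL'(0)|_{\mathcal R}\\ 0 & L(0)|_{\mathcal R}\end{pmatrix}.
\]
It is block upper triangular, and both diagonal entries are isomorphisms by hypothesis. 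So $\mathcal A(0)$ is an isomorphism. Also $\mathcal A(\epsilon)\to\mathcal A(0)$ in norm, because every correction term is $O(\epsilon)$ or involves $R(\epsilon)$.

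A Neumann series argument then shows that $\mathcal A(\epsilon)$ is invertible for $\epsilon\in[0,\delta]$, with uniformly bounded inverses that depend continuously on $\epsilon$. For $\epsilon>0$ the block system is equivalent to $L(\epsilon)v=f(\epsilon)$, since the original equation is recovered by recombining the components and multiplying the first one by $\epsilon$. Its unique solution therefore coincides with $[L(\epsilon)]^{-1}f(\epsilon)$. The right-hand sides converge as $\epsilon\to0$, so $(n(\epsilon),w(\epsilon))\to\mathcal A(0)^{-1}(Pf'(0),f(0))$.

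This limit gives both conditions in \eqref{limit}. The second row yields $L(0)w(0)=f(0)$, hence $L(0)v(0)=f(0)$. The first row yields $PL'(0)v(0)=Pf'(0)$.

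For uniqueness, suppose $v$ solves the system \eqref{limit}. Then $Qv$ is forced to be $w_0$. The nullspace part $Pv$ is then determined by inverting $PL'(0)|_{\mathcal N}$, exactly as in the triangular structure of $\mathcal A(0)$.

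The stability estimate follows from the uniform bound on $\|\mathcal A(\epsilon)^{-1}\|$. The data norms are controlled by $\sup\|f\|+\sup\|f'\|$ on $[0,\delta]$; here $\|r(\epsilon)\|\le 2\sup\|f'\|$ by the mean value inequality, and $\|f(0)\|$ is bounded by $\sup\|f\|$.

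The main technical point is making sure the division by $\epsilon$ in the $P$-row is legitimate. This needs $PL(0)=0$, i.e.\ that $\mathcal R$ is invariant under the projection and $L(0)X\subset\mathcal R$. It also needs the remainder $R(\epsilon)$ to be $o(1)$ in operator norm, which requires the differentiability of $K$ to hold in the norm topology.
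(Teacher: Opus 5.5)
The paper does not prove this lemma. It imports it from \cite{Kirsch2023}, attributes the original singular perturbation result to \cite[Theorem 1.32]{Colton2013}, and its own work consists only of verifying the hypotheses (Lemmas~\ref{Lem-p}--\ref{lem5}). So your argument has to stand on its own, and it does.

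Your Lyapunov--Schmidt splitting along $X=\mathcal N\oplus\mathcal R$ works as follows. You divide the $P$-row by $\epsilon$. At $\epsilon=0$ the block operator $\mathcal A(0)$ is upper triangular with diagonal entries $PL'(0)|_{\mathcal N}$ and $L(0)|_{\mathcal R}$, hence invertible. A Neumann series then handles $\mathcal A(\epsilon)$ for small $\epsilon$. Together these give:
\begin{itemize}
\item the continuous extension of $v$ at $\epsilon=0$;
\item the limit system \eqref{limit} and its unique solvability;
\item the stability bound, with a constant depending only on $\sup_{[0,\delta]}\|\mathcal A(\epsilon)^{-1}\|$ and $\|P\|$, not on $f$.
\end{itemize}
Continuity of $v$ on $(0,\epsilon_0)$, away from $0$, is immediate from norm-continuity of $L(\epsilon)$ and continuity of inversion.

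Your route is constructive and uses compactness only at $\epsilon=0$, through the Riesz decomposition of $L(0)$. It also yields invertibility of $L(\epsilon)$ for $\epsilon\in(0,\delta]$ for free. For $\epsilon>0$, $L(\epsilon)v=g$ is equivalent to the block system with right-hand side $(\epsilon^{-1}Pg,\,Qg)$, which $\mathcal A(\epsilon)$ solves.

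Two small expository corrections:
\begin{itemize}
\item Dividing the $P$-row by $\epsilon$ uses not only $PL(0)=0$ but also $Pf(0)=0$. This holds because $f(0)\in\mathcal R=\ker P$, and you should say so at that step.
\item In your last paragraph, the property you need is that $\mathcal R$ is the kernel of $P$. Then $PL(0)=0$ because $L(0)X=\mathcal R$. Invariance of $\mathcal R$ under $P$ is not the relevant property.
\end{itemize}
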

The original version of Lemma \ref{LAP} dates back to
\cite[Theorem 1.32, Section 1.4.]{Colton2013}, which
was known as a singular perturbation theory.
The first equation in \eqref{limit} ensures that the limiting solution satisfies the original scattering problem, while the second one serves as an additional constraint that enforces uniqueness and stability. In the subsequent sections, we shall consider the incident pressure and shear plane waves separately; however, the two cases ultimately give rise to the same constraint condition.

\subsection{Incident pressure wave}\label{subsec:pressure}
We first consider the scattering problem with the incident pressure wave $u^{\text{in}}=u_p^{\text {in}}(x)=\hat{\theta} \exp \left(i k_p \hat{\theta}\cdot x\right)$. Set $\alpha:=k_p\sin\theta$. To consider frequency-independent energy spaces, we need to investigate the operator equation \eqref{operator} over the periodic Sobolev space $V_{\rm per}(\Omega_b)$. By the representation theorem of Riesz, there exist $f_{\omega} \in V_{{\rm per}}\left(\Omega_b\right)$ and a linear bounded operator $L_{\omega}$ from $V_{{\rm per}}\left(\Omega_b\right)$ into itself such that, for all $\psi\in  V_{{\rm per}}\left(\Omega_b\right)$ (see e.g. \eqref{R-p} for the first identity below),
\begin{align*}
\langle f_{\omega},\psi\rangle
=&2i\tau_{2}\omega e^{-ib\,\omega\,\cos\theta/\sqrt{\lambda+2\mu}} \int_{0}^{2\pi}  \left(\begin{array}{c}
	-\sin\theta \\  \tau_{1}\sqrt{\lambda+2\mu}
\end{array}\right)\cdot\overline{\psi(x_{1},b)}\, \mathrm{d}x_{1},
\end{align*}
\begin{equation*}
\tau_{1}:=\sqrt{\frac{\lambda+\mu(1+\cos^{2}\theta)}{\mu(\lambda+2\mu)}},  \quad  \tau_{2}:=\frac{\cos\theta \sqrt{\lambda+2\mu}}{\sin^{2}\theta+\tau_{1}\cos\theta \sqrt{\lambda+2\mu}},\quad
\end{equation*}
and
\begin{align*}
\langle L_{\omega}v,\psi\rangle=&B_{\omega}(v,\psi)=\int_{\Omega_b}\left(E(v, \overline{\psi})-(\omega^2-\mu\alpha^{2})v \cdot \overline{\psi}-2i\alpha\mu\frac{\partial v}{\partial x_{1}}\cdot\overline{\psi}\right) \mathrm{d}x-\int_{\Gamma_b} \overline{\psi} \cdot \mathscr{T}_{\omega}v \mathrm{~d}s \nonumber \\
&-i\alpha(\lambda+\mu)\int_{\Omega_b}\left[\left(\frac{\partial v_{2}}{\partial x_{2}}+2\frac{\partial v_{1}}{\partial x_{1}}\right)\overline{\psi}_{1}+ \frac{\partial v_{1}}{\partial x_{2}}\overline{\psi}_{2}\right] \mathrm{d}x +(\lambda+\mu)\int_{\Omega_b}\alpha^{2}v_{1}\overline{\psi}_{1}\mathrm{d}x \nonumber\\
=&\int_{\Omega_b}\left(E(v,\overline{\psi})-\mu\tau_{1}^{2}\omega^2v \cdot \overline{\psi}-\frac{2i\mu\sin\theta}{\sqrt{\lambda+2\mu}}\omega\frac{\partial v}{\partial x_{1}}\cdot\overline{\psi}\right) \mathrm{d}x \nonumber \\
&-\frac{i(\lambda+\mu)\sin\theta}{\sqrt{\lambda+2\mu}}\omega \int_{\Omega_b}\left[\left(\frac{\partial v_{2}}{\partial x_{2}}+2\frac{\partial v_{1}}{\partial x_{1}}\right)\overline{\psi}_{1}+ \frac{\partial v_{1}}{\partial x_{2}}\overline{\psi}_{2}\right] \mathrm{d}x  \nonumber\\
&+\frac{(\lambda+\mu)\sin^{2}\theta}{\lambda+2\mu}\omega^{2} \int_{\Omega_b}v_{1}\overline{\psi}_{1}\mathrm{d}x +2\pi\sum_{n\in\mathbb{Z}} \overline{\psi}_{n} \cdot \left(W_n(\omega) v_n\right).
\end{align*}
Choose $n_0>k_s>0$ sufficiently large such that the 2-by-2 matrix ${\rm Re}\,W_n$ is positive definite over $\C^2$ for all $|n|>n_0$; see \cite[Lemma 2 (i)]{EH2010} or \cite[Lemma 3.1 (i)]{WLX}. We then introduce the bounded operator $\mathscr{T}_{\omega,1}: H_{\rm per}^{1/2}(\Gamma_b)\rightarrow H_{\rm per}^{-1/2}(\Gamma_b)$ by
\ben
\mathscr{T}_{\omega,1}v=-\sum_{|n|\geq n_{0}} W_n(\omega) v_n e^{i n x_1},\quad v(x_1):=\sum_{n\in \Z}  v_n e^{i n x_1}\in H_{\rm per}^{1/2}(\Gamma_b).
\enn
Then it is obvious that $\mathscr{T}_{\omega,1}$ is positive definite over  $H_{\rm per}^{1/2}(\Gamma_b)$ and  $\mathscr{T}_{\omega}-\mathscr{T}_{\omega,1}$ is a finite dimensional operator.
We equip $V_{{\rm per}}\left(\Omega_b\right)$ with the inner product
\begin{equation}\label{product}
\langle v,\psi\rangle:=\int_{\Omega_b}E(v,\overline{\psi})\mathrm{~d}x-\int_{\Gamma_b} \overline{\psi} \cdot \mathscr{T}_{\omega,1}v \mathrm{~d} s.
\end{equation}
Referring to \cite[Remark 2]{EH2010}, one can always find a positive constant $C$ such that
\begin{equation*}
\int_{\Omega_b}E(v,\overline{v})\mathrm{~d}x \geq C\,|| v||_{V_{{\rm per}}\left(\Omega_b\right)}^{2}, \quad \forall \, v\in V_{{\rm per}}\left(\Omega_b\right).
\end{equation*}
Hence, the norm derived from the above inner product \eqref{product} is an equivalent norm of $V_{{\rm per}}\left(\Omega_b\right)$. Then the operator equation corresponding to the periodic variational formulation can be rewritten as
\begin{equation} \label{c0}
L_{\omega}v=f_{\omega} \quad {\rm in}~V_{{\rm per}}\left(\Omega_b\right).
\end{equation}
Using the compact embedding of $V_{{\rm per}}\left(\Omega_b\right)\rightarrow[L_{{\rm per}}^{2}(\Omega_{b})]^{2}$, one can show that the operator $K_{\omega}:=I-L_{\omega}$, given by
\begin{align*}
\langle K_{\omega}v,\psi\rangle=&\int_{\Omega_b}\left((\omega^2-\mu\alpha^{2})v \cdot \overline{\psi}+2i\alpha\mu\frac{\partial v}{\partial x_{1}}\cdot\overline{\psi}\right) \mathrm{d} x+\int_{\Gamma_b} \overline{\psi}\cdot (\mathscr{T}_{\omega}-\mathscr{T}_{\omega,1})v\mathrm{~d} s \\
&+i\alpha(\lambda+\mu)\int_{\Omega_b}\left[\left(\frac{\partial v_{2}}{\partial x_{2}}+2\frac{\partial v_{1}}{\partial x_{1}}\right)\overline{\psi}_{1}+ \frac{\partial v_{1}}{\partial x_{2}}\overline{\psi}_{2}\right] \mathrm{d}x -(\lambda+\mu)\int_{\Omega_b}\alpha^{2}v_{1}\overline{\psi}_{1}\mathrm{d}x,
\end{align*}
for all $v$, $\psi\in V_{{\rm per}}(\Omega_{b})$ is compact as an operator from $V_{{\rm per}}(\Omega_{b})$ into itself. Below we collect some properties of the operator $L_{\omega}$, which extends the results of \cite[Lemma 4.1]{HuKiZh} from the Helmholtz equation to the Navier equation.

\begin{lemma} \label{Lem-p}
Let $\alpha=k_{p}\sin\theta$, suppose that $|\alpha_{n}|\neq k_{p}$ and $|\alpha_{n}|\neq k_{s}$ for any $n\in\mathbb{Z}$.
	
(i) The null space $\mathcal{N}:=\mathcal{N}(L_{\omega})=\mathcal{N}(L_{\omega}^{*})$ is finite dimensional and consists of exponentially decaying modes.
	
(ii) The Riesz number of $L_{\omega}$ is one, that is, $\mathcal{N}(L_{\omega}) =\mathcal{N}(L_{\omega}^{2})$. Moreover, there holds the orthogonal decomposition $V_{{\rm per}}(\Omega_{b})=\mathcal{N}(L_{\omega})\oplus\mathcal{R}(L_{\omega})$. Here $\mathcal{R}(L_{\omega})$ denotes the range of the operator $L_{\omega}$.
	
(iii) If ${\rm Im}\,\omega>0$, there is a unique solution to \eqref{c0} for any $b> \Lambda^{+}$.
\end{lemma}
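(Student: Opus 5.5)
Part (i) comes first. Take $v\in\mathcal N(L_\omega)$ and set $u:=e^{i\alpha x_1}v$. Extend $u$ above $\Gamma_b$ by its upward Rayleigh expansion \eqref{a4}, using the coefficients fixed by \eqref{un}. This gives an $\alpha$-quasiperiodic solution of the homogeneous problem (DP).

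Now choose $\varphi=u$ in $\widetilde B_\omega(u,u)=0$ and take imaginary parts. The volume term $\int_{\Omega_b}(E(u,\overline u)-\omega^2|u|^2)$ is real, so we are left with
\[
{\rm Im}\int_{\Gamma_b}\overline u\cdot\widetilde{\mathscr T}_\omega u\,ds=0 .
\]
Expanded in Fourier modes, this imaginary part equals $-2\pi\sum_n {\rm Im}\,(\overline{\widetilde u_n}\cdot W_n\widetilde u_n)$. With the explicit $W_n$, and writing $\widetilde u_n=D_n(A_{p,n}e^{i\beta_nb},A_{s,n}e^{i\gamma_nb})^\top$, the standard energy-flux computation (as in \cite{EH2010}) yields
\[
\sum_{|\alpha_n|<k_p}\omega^2\beta_n|A_{p,n}|^2+\sum_{|\alpha_n|<k_s}\omega^2\gamma_n|A_{s,n}|^2=0 ,
\]
up to positive constants. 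Assumption \ref{assumption} rules out $\beta_n=0$ or $\gamma_n=0$, so every propagating coefficient vanishes. Hence $u$ contains only modes with ${\rm Im}\,\beta_n>0$ or ${\rm Im}\,\gamma_n>0$, i.e.\ it is exponentially decaying. Since $\mathcal N(L_\omega)$ is the kernel of a Fredholm operator $I-K_\omega$ with $K_\omega$ compact, it is finite dimensional.

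For $\mathcal N(L_\omega)=\mathcal N(L_\omega^*)$, the adjoint is given by $\langle L_\omega^*v,\psi\rangle=\overline{B_\omega(\psi,v)}$. The volume part of $B_\omega$ is Hermitian: it is the conjugation by $e^{i\alpha x_1}$ of the Hermitian form $\int(E(u,\overline\varphi)-\omega^2u\cdot\overline\varphi)$. So the only non-Hermitian piece is the DtN term. For real $\omega$, the difference $W_n-W_n^*$ is supported on the finitely many propagating indices $n$. For $v$ in either kernel, the flux argument above, applied to $L_\omega$ or to $L_\omega^*$ (whose DtN symbol is $W_n^*$, giving the same sign argument), forces those propagating Fourier components of $v|_{\Gamma_b}$ to vanish. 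On such $v$ the two forms therefore agree, so $L_\omega v=0\iff L_\omega^*v=0$.

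For (ii), let $L_\omega^2v=0$ and set $w:=L_\omega v\in\mathcal N(L_\omega)=\mathcal N(L_\omega^*)$. Then $\|w\|^2=\langle L_\omega v,w\rangle=\langle v,L_\omega^*w\rangle=0$, so $v\in\mathcal N(L_\omega)$. Because $\mathcal R(L_\omega)$ is closed (Fredholm), $\mathcal R(L_\omega)=\mathcal N(L_\omega^*)^\perp=\mathcal N(L_\omega)^\perp$, and this gives the orthogonal decomposition.

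For (iii), let ${\rm Im}\,\omega>0$. By Fredholmness it suffices to prove uniqueness. Take a solution of the homogeneous problem. Then $k_p,k_s$ have positive imaginary part, so with the branch ${\rm Im}\,\beta_n,{\rm Im}\,\gamma_n>0$ every Rayleigh mode decays exponentially and $u\in H^1$ of the whole strip. Letting $b\to\infty$ in $\widetilde B_\omega(u,u)=0$ removes the boundary term, which leaves
\[
\int_{D\cap\{0<x_1<2\pi\}}\big(E(u,\overline u)-\omega^2|u|^2\big)\,dx=0 .
\]
Since $E(u,\overline u)\ge0$ and ${\rm Im}\,\omega^2=2\,{\rm Re}\,\omega\,{\rm Im}\,\omega\ne0$ for ${\rm Re}\,\omega>0$, the imaginary part gives $\|u\|_{L^2}=0$, hence $u=0$. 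The finitely many exceptional $\omega$ do not matter here because $\omega$ stays near the positive real axis.

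The main obstacle is the flux identity in (i): showing that ${\rm Im}\,(\overline{\widetilde u_n}\cdot W_n\widetilde u_n)$ is a positive combination of $\beta_n|A_{p,n}|^2$ and $\gamma_n|A_{s,n}|^2$ for propagating $n$ and vanishes for evanescent $n$. I would verify this directly from $W_nD_n$ by computing $D_n^*W_nD_n$ explicitly.
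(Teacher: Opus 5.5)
Your (ii) and the flux computation in (i) follow the paper, but your proof of (iii) has a genuine gap. In \eqref{c0}, $L_\omega$ is the periodic operator in which $\alpha=k_p\sin\theta=\omega\sin\theta/\sqrt{\lambda+2\mu}$ is itself a function of $\omega$ (see \eqref{L}; this is how $L(\epsilon)=L_{\omega+i\epsilon}$ is built for Lemma~\ref{LAP}). So for ${\rm Im}\,\omega>0$ every $\alpha_n$ is complex. Your argument implicitly keeps $\alpha$ real and fixed, and it fails in two places.

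First, $u=e^{i\alpha x_1}v$ is now quasi-periodic with a non-unimodular factor $e^{2\pi i\alpha}$, so $B_\omega(v,v)\neq\widetilde B_\omega(u,u)$: the lateral terms in Betti's formula no longer cancel. In periodic form, the first-order terms $-2i\alpha\mu\int\partial_1 v\cdot\overline v$ and $-i\alpha(\lambda+\mu)\int[\cdots]$ are no longer real. Over $\Omega_\infty$ the imaginary part of the form is $-{\rm Im}\,\omega\,\big(\langle\mathbf Bv,v\rangle+2\,{\rm Re}\,\omega\,\langle\mathbf Cv,v\rangle\big)$, with $\mathbf B,\mathbf C$ as in \eqref{B1}--\eqref{C1} and $\mathbf B$ indefinite for $\theta\neq0$. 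Hence ``${\rm Im}(\omega^2)\|u\|^2=0$'' does not follow.

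Second, ${\rm Im}\,\beta_n^2={\rm Im}\,k_p^2-2\,{\rm Re}\,\alpha_n\,{\rm Im}\,\alpha_n$ can be negative, so ${\rm Im}\,\beta_n,{\rm Im}\,\gamma_n>0$ is not automatic. You must show ${\rm Re}\,\beta_n^2<0$ (resp.\ ${\rm Re}\,\gamma_n^2<0$) whenever the imaginary part is $\le 0$, as the paper does; in the shear case this even requires $\sin^2\theta<\mu/(\lambda+2\mu)$.

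The missing idea is the quadratic-pencil structure. The form is $\langle\mathbf Av,v\rangle-\omega\langle\mathbf Bv,v\rangle-\omega^2\langle\mathbf Cv,v\rangle$, with $\omega$-independent self-adjoint $\mathbf A,\mathbf B,\mathbf C$, $\mathbf A>0$, and crucially $\langle\mathbf Cv,v\rangle=\cos^2\theta\|v_1\|^2+\mu\tau_1^2\|v_2\|^2>0$. For $v\neq0$, $\omega$ is then a root of the real quadratic $c\,\omega^2+b\,\omega-a=0$ with $a,c>0$. Its discriminant $b^2+4ac$ is positive, so $\omega\in\R$, a contradiction. The paper reaches the same conclusion via the linearization $\mathcal AV=\omega\mathcal BV$. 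Your argument proves uniqueness only for a different problem in which $\alpha$ is frozen.

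There is also a smaller gap in your proof of $\mathcal N(L_\omega)=\mathcal N(L_\omega^*)$. For $k_p<|\alpha_n|<k_s$ the flux identity kills only $A_{s,n}$. The Fourier component $u_n=A_{p,n}e^{i\beta_n b}(\alpha_n,\beta_n)^\top$ need not vanish, and $W_n$ is not Hermitian there: for example $(W_n)_{11}=\omega^2|\beta_n|/d_n$ with $d_n=\alpha_n^2+\beta_n\gamma_n\notin\R$. So your claim that the propagating Fourier components vanish is false for these indices. You still need to check that $W_n^*$ and $W_n$ agree on $(\alpha_n,\beta_n)^\top$. They do, both giving $-i\,(2\mu\alpha_n\beta_n,\ \omega^2-2\mu\alpha_n^2)^\top$; this is the paper's Case (ii). For $|\alpha_n|>k_s$ your Hermitian claim is correct, and for $|\alpha_n|<k_p$ one has $u_n=0$.
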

\begin{proof}
(i) For $v\in\mathcal{N}(L_{\omega})$, setting $\varphi=u=ve^{i\alpha x_{1}}$ in the homogeneous form of equation \eqref{var0}, we get
\begin{align*}
0=\widetilde{B}_{\omega}(u,u)=&\int_{\Omega_b}\left(E(u, \overline{u})-\omega^2 u \cdot \overline{u}\right) \mathrm{d} x-\int_{\Gamma_b} \overline{u} \cdot \mathscr{\widetilde{T}}_{\omega} u \mathrm{~d}s  \\
=&\int_{\Omega_b}\left(E(u, \overline{u})-\omega^2 |u|^{2}\right) \mathrm{d} x+2\pi\sum_{n\in\mathbb{Z}} \overline{u}_{n} \cdot \left(W_n(\omega) u_n\right).
\end{align*}
Taking the imaginary yields (refer to Lemma 4 in \cite{EH2010})
\begin{equation*}
0=\sum_{n\in\mathbb{Z}} \overline{u}_{n} \cdot \left[\,{\rm Im}W_n(\omega)\, u_n\,\right] ={\rm Im}\int_{\Gamma_b} \overline{u} \cdot \mathscr{\widetilde{T}}_{\omega} u \mathrm{~d}s =2\pi\omega^{2}\left( \sum_{|\alpha_{n}|<k_{p}}\beta_{n}|A_{p,n}|^{2} +\sum_{|\alpha_{n}|<k_{s}}\gamma_{n}|A_{s,n}|^{2}\right),
\end{equation*}
that is, $A_{p,n}=0$ for $|\alpha_{n}|<k_{p}$ and $A_{s,n}=0$ for $|\alpha_{n}|<k_{s}$. This together with the Assumption \ref{assumption} gives
\begin{equation*}
u(x)=\sum_{|\alpha_{n}|> k_{p}}A_{p,n}\left(\begin{array}{c}
	\alpha_n \\   \beta_n
\end{array}\right) \exp\left(i\alpha_n x_1+i\beta_n x_2\right) +\sum_{|\alpha_{n}|> k_{s}}A_{s, n}\left(\begin{array}{c}
	\gamma_n \\  -\alpha_n
	\end{array}\right) \exp \left(i \alpha_n x_1+i \gamma_n x_2\right),
\end{equation*}
that is, $u$ and also $v$ consist of exponentially decaying modes only. The kernal space $\mathcal{N}(L_\omega)$ must be finite dimensional, because $L_\omega=I-K_\omega$ is a Fredholm operator. This implies that
\begin{align} \label{Lw}
\langle L_{\omega}v,\psi\rangle=\int_{\Omega_b}\left(E(u, \overline{\varphi})-\omega^2 u\cdot \overline{\varphi}\right) \mathrm{d} x+2\pi\sum_{|\alpha_n|>k_p} \big(W_n(\omega) \,u_{n}, \overline{\varphi}_n \big)_{\C^2},
\end{align}
where $\big(\cdot, \cdot\big)_{\C^2}$ represents the inner product in $\C^2$.
Here $u=e^{i\alpha x_{1}}v$ and $\varphi=e^{i\alpha x_{1}}\psi$, and $u_n, \varphi_n$ denote the Fourier coefficients of $u$ and $\varphi$ (see \eqref{un}), respectively.

It remains to prove $\mathcal{N}(L_\omega)=\mathcal{N}(L^*_\omega) $.
The adjoint operator $L_{\omega}^{*}$ of $L_{\omega}$ is defined by
\begin{align*}
&\langle L_{\omega}^{*}v,\psi\rangle=\langle v,L_{\omega}\psi\rangle=\overline{\langle L_{\omega}\psi,v\rangle}=\overline{\widetilde{B}_{\omega}(\varphi,u)}  \\ =&\int_{\Omega_b}\left(E(u, \overline{\varphi})-\omega^2 u\cdot \overline{\varphi}\right) \mathrm{d} x+2\pi\sum_{n\in\mathbb{Z}}
\big(u_{n}, W_n(\omega) \varphi_n \big)_{\C^2}, \quad \forall v,\psi\in V_{{\rm per}}(\Omega_{b}).
\end{align*}
Arguing as in the above, it follows that each $v\in \mathcal{N}(L_{\omega}^{*})$ has vanishing Rayleigh coefficients of the incoming modes, that is $A_{p,n}=0$ for $|\alpha_{n}|<k_{p}$ and $A_{s,n}=0$ for $|\alpha_{n}|<k_{s}$.
Noting that
\ben
\big(u_{n}, W_n(\omega) \varphi_n \big)_{\C^2}=
\big( \overline{W_n(\omega)}^\top\,u_{n}, \varphi_n \big)_{\C^2}
\enn
where $()^\top$ denotes the transpose of a matrix, we get	
\begin{align} \label{Lw*}
\langle L_{\omega}^{*}v,\psi\rangle
=\int_{\Omega_b}\left(E(u, \overline{\varphi})-\omega^2 u\cdot \overline{\varphi}\right) \mathrm{d} x+2\pi\sum_{|\alpha_n|>k_p} \big( \overline{W_n(\omega)}^\top u_{n}, \varphi_n \big)_{\C^2} .
\end{align}
Using \eqref{un}, we have
\be\label{Wn}
W_n(\omega) \,u_{n}=\frac{1}{i}\left(\begin{array}{cc}
\omega^{2}\beta_n/d_{n} & 2\mu\alpha_{n}-\omega^{2}\alpha_{n}/d_{n} \\
-2\mu\alpha_{n}+\omega^{2}\alpha_{n}/d_{n} & \omega^{2}\gamma_n/d_{n}
\end{array}\right)\left(\begin{array}{cc}
\alpha_n & \gamma_n \\
\beta_n & -\alpha_n
\end{array}\right) \left(\begin{array}{c}
A_{p,n} \exp \left(i \beta_n b\right) \\
A_{s,n} \exp \left(i \gamma_n b\right)
\end{array}\right).
\en
  Case (i): $|\alpha_n|>k_s$. Since both $\beta_n$ and $\gamma_n$ are purely imaginary, it holds that $\overline{\beta_n}=-\beta_n$, $\overline{\gamma_n}=-\gamma_n$ and $\overline{d_n}=d_n=(\alpha_n^2+\beta_n\gamma_n)$. Hence
  \ben
  \overline{W_n(\omega)}^\top =
  \frac{-1}{i}\left(\begin{array}{cc}
  -\omega^{2}\beta_n/d_{n} & -2\mu\alpha_{n}+\omega^{2}\alpha_{n}/d_{n} \\
  2\mu\alpha_{n}-\omega^{2}\alpha_{n}/d_{n} & -\omega^{2}\gamma_n/d_{n}
  \end{array}\right)=W_n(\omega),
  \enn
  implying that $\overline{W_n(\omega)}^\top u_n=W_n(\omega)u_n$ for all $u_n \in \C^2$.
  
\noindent Case (ii): $k_p<|\alpha_n|<k_s$. In this case we use $A_{s,n}=0$ to rewrite the expression \eqref{Wn} as (see also \cite{EH2010})
  \ben
  W_n(\omega) \,u_{n}=\frac{1}{i}\left(\begin{array}{cc}
  2\mu\alpha_n \beta_n & \omega^2-2\mu \alpha_n^2 \\
  \omega^2-2\mu \alpha_n^2 & -2\mu\alpha_n \gamma_n
  \end{array}\right) \left(\begin{array}{c} A_{p,n} \exp \left(i \beta_n b\right) \\
  0
  \end{array}\right) 
  =-i \left(\begin{array}{c}
  2\mu\alpha_n \beta_n \\
  \omega^2-2\mu \alpha_n^2
  \end{array}\right)	A_{p,n} \exp \left(i \beta_n b\right).
  \enn
  Similarly, using $\overline{\beta_n}=-\beta_n$, $\overline{\gamma_n}=\gamma_n$ and $\overline{d_n}=(\alpha_n^2-\beta_n\gamma_n)$, we obtain
  \ben
  \overline{W_n(\omega)}^\top \,u_{n}  &=&  \frac{-1}{i}\left(\begin{array}{cc}
  -\omega^{2}\beta_n/\overline{d_{n}}&-2\mu\alpha_{n}+\omega^{2}\alpha_{n}/\overline{d_{n}}\\
  2\mu\alpha_{n}-\omega^{2}\alpha_{n}/\overline{d_{n}} & \omega^{2}\gamma_n/\overline{d_{n}}
  \end{array}\right)\left(\begin{array}{cc}
  \alpha_n \\  \beta_n
  \end{array}\right)  A_{p,n} \exp \left(i \beta_n b\right)\\
  &=&i \left(\begin{array}{c}
	-2\mu\alpha_{n}\beta_n \\
	2\mu \alpha_n^2-\omega^2
  \end{array}\right)	A_{p,n} \exp \left(i \beta_n b\right)=W_n(\omega) \,u_{n}.
  \enn
Now, combining \eqref{Lw} and \eqref{Lw*} yields that
\ben
\langle L_{\omega}v,\psi\rangle=0 \quad\mbox{if and only if}\quad \langle L^*_{\omega}v,\psi\rangle=0,
\enn
that is, $\mathcal{N}(L_\omega)=\mathcal{N}(L^*_\omega) $.	
	
(ii) It is obvious that $\mathcal{N}(L_{\omega})\subseteq\mathcal{N}(L_{\omega}^{2})$. To prove the reverse direction, we assume $L_{\omega}^{2}w=0$ for some $w\in V_{{\rm per}}(\Omega_{b})$ and set $v=L_{\omega}w\in\mathcal{R}(L_{\omega})$. Since $v\in\mathcal{N}(L_{\omega}) =\mathcal{N}(L_{\omega}^{*})$, we obtain
\begin{equation*}
\|v\|^{2}=\langle v,v\rangle=\langle v,L_{\omega}w\rangle=\langle L_{\omega}^{*}v,w\rangle=0,
\end{equation*}
which proves $\mathcal{N}(L_{\omega}^{2})\subseteq\mathcal{N}(L_{\omega})$ and thus the coincidence $\mathcal{N}(L_{\omega}) =\mathcal{N}(L_{\omega}^{2})$. This also implies $\mathcal{N}\cap\mathcal{R}(L_{\omega})=\emptyset$ and hence $V_{{\rm per}}(\Omega_{b}) =\mathcal{N}(L_{\omega})\oplus\mathcal{R}(L_{\omega})$. The orthogonality between $\mathcal{N}$ and $\mathcal{R}(L_{\omega})$ follows from the relation $\mathcal{N}(L_{\omega})= \mathcal{N}(L_{\omega}^{*})$.
	
(iii) We claim that ${\rm Im}\,\beta_{n}>0$, ${\rm Im}\, \gamma_{n}>0$ for all $n\in\mathbb{Z}$ if ${\rm Im}\,\omega>0$. Recall that the square root function $z \mapsto \sqrt{z}$ was chosen to be holomorphic in the region $\{z \in \mathbb{C}: z \notin i \mathbb{R}_{\leq 0}\}$. In particular we have that
\be\label{imag}
{\rm Im} \sqrt{z}>0\quad\mbox{for all}\quad z \in \mathbb{\widetilde{C}}:=\{z \in \mathbb{C}: {\rm Re}\, z<0 \quad\mbox{or}\quad {\rm Im}\, z>0\}.
\en
Denoting $\omega:=\omega_{0}+i\varepsilon$ with $\varepsilon>0$, then
\begin{align*}
\beta_{n}^{2}=&k_{p}^{2}-\alpha_{n}^{2}=\frac{\omega^{2}}{\lambda+2\mu} -\left(n+\frac{\omega}{\sqrt{\lambda+2\mu}}\sin\theta\right)^{2} \\
=&\frac{\omega_{0}^{2}-\varepsilon^{2}}{\lambda+2\mu} -\left(n+\frac{\omega_{0}\sin\theta}{\sqrt{\lambda+2\mu}}\right)^{2} +\frac{\varepsilon^{2}\sin^{2}\theta}{\lambda+2\mu} +\frac{2i\varepsilon\omega_{0}}{\lambda+2\mu} -\frac{2i\varepsilon\sin\theta}{\sqrt{\lambda+2\mu}} \left(n+\frac{\omega_{0}\sin\theta}{\sqrt{\lambda+2\mu}}\right),
\end{align*}
and
\begin{align*}
\gamma_{n}^{2}=&k_{s}^{2}-\alpha_{n}^{2}=\frac{\omega^{2}}{\mu} -\left(n+\frac{\omega}{\sqrt{\lambda+2\mu}}\sin\theta\right)^{2} \\
=&\frac{\omega_{0}^{2}-\varepsilon^{2}}{\mu} -\left(n+\frac{\omega_{0}\sin\theta}{\sqrt{\lambda+2\mu}}\right)^{2} +\frac{\varepsilon^{2}\sin^{2}\theta}{\lambda+2\mu} +\frac{2i\varepsilon\omega_{0}}{\mu} -\frac{2i\varepsilon\sin\theta}{\sqrt{\lambda+2\mu}} \left(n+\frac{\omega_{0}\sin\theta}{\sqrt{\lambda+2\mu}}\right).
\end{align*}
We compute the real and imaginary parts of $\beta_n^2$ and $\gamma_n^2$ as follows:
\ben
{\rm Re}\beta_{n}^{2}&=&\frac{\omega_{0}^{2}-\varepsilon^{2}}{\lambda+2\mu} -\left(n+\frac{\omega_{0}\sin\theta}{\sqrt{\lambda+2\mu}}\right)^{2} +\frac{\varepsilon^{2}\sin^{2}\theta}{\lambda+2\mu}, \quad
{\rm Im}\beta_{n}^{2}=\frac{2\varepsilon\omega_{0}}{\lambda+2\mu} -\frac{2\varepsilon\sin\theta}{\sqrt{\lambda+2\mu}} \left(n+\frac{\omega_{0}\sin\theta}{\sqrt{\lambda+2\mu}}\right),\\
{\rm Re}\gamma_{n}^{2}&=&\frac{\omega_{0}^{2}-\varepsilon^{2}}{\mu} -\left(n+\frac{\omega_{0}\sin\theta}{\sqrt{\lambda+2\mu}}\right)^{2} +\frac{\varepsilon^{2}\sin^{2}\theta}{\lambda+2\mu}, \quad
{\rm Im}\gamma_{n}^{2}=\frac{2\varepsilon\omega_{0}}{\mu} -\frac{2\varepsilon\sin\theta}{\sqrt{\lambda+2\mu}} \left(n+\frac{\omega_{0}\sin\theta}{\sqrt{\lambda+2\mu}}\right).
\enn
	
Assume that ${\rm Im}\beta_{n}^{2}\leq0$, then we have
\begin{equation*}
0<\frac{\varepsilon\omega_{0}}{\lambda+2\mu} \leq \frac{\varepsilon\sin\theta}{\sqrt{\lambda+2\mu}} \left(n+\frac{\omega_{0}\sin\theta}{\sqrt{\lambda+2\mu}}\right),
\end{equation*}
which implies that
\begin{equation*}
{\rm Re}\beta_{n}^{2}\leq \frac{\omega_{0}^{2}-\varepsilon^{2}\cos^{2}\theta}{\lambda+2\mu} -\frac{\omega_{0}^{2}}{(\lambda+2\mu)\sin^{2}\theta} =\frac{-1}{\lambda+2\mu}\left[\varepsilon^{2}\cos^{2}\theta + \left(\frac{1}{\sin^{2}\theta}-1\right)\omega_{0}^{2}\right]<0.
\end{equation*}
	
Assume that ${\rm Im}\gamma_{n}^{2}\leq0$, then we have
\begin{equation*}
0<\frac{\varepsilon\omega_{0}}{\mu} \leq\frac{\varepsilon\sin\theta}{\sqrt{\lambda+2\mu}} \left(n+\frac{\omega_{0}\sin\theta}{\sqrt{\lambda+2\mu}}\right),
\end{equation*}
which implies that
\begin{equation*}
{\rm Re}\gamma_{n}^{2}\leq\frac{\omega_{0}^{2}-\varepsilon^{2}}{\mu} +\frac{\varepsilon^{2}\sin^{2}\theta}{\lambda+2\mu} -\frac{(\lambda+2\mu)\omega_{0}^{2}}{\mu^{2}\sin^{2}\theta} =\varepsilon^{2}\left(\frac{\sin^{2}\theta}{\lambda+2\mu}-\frac{1}{\mu}\right) +\frac{\omega_{0}^{2}}{\mu}\left(1-\frac{\lambda+2\mu}{\mu\sin^{2}\theta}\right) <0.
\end{equation*}
Using \eqref{imag}, we conclude that ${\rm Im}\,\beta_{n}>0$, ${\rm Im}\, \gamma_{n}>0$ for all $n\in\mathbb{Z}$.
	
Let $v\in V_{{\rm per}}(\Omega_{\infty})$ be a solution of the homogeneous problem \eqref{a0}, \eqref{a1}, \eqref{a4} with ${\rm Im}\,\omega>0$. By the Rayleigh expansion \eqref{a4} and the first assertion, $v$ must exponetially decay in the $x_{2}$-direction, allowing a variational formulation over $V_{{\rm per}}(\Omega_\infty)$:
\begin{align*}
&\int_{\Omega_\infty}\left(E(v,\overline{\psi})-\mu\tau_{1}^{2}\omega^2v \cdot \overline{\psi}-\frac{2i\mu\sin\theta}{\sqrt{\lambda+2\mu}}\omega\frac{\partial v}{\partial x_{1}}\cdot\overline{\psi}\right) \mathrm{d}x +\frac{(\lambda+\mu)\sin^{2}\theta}{\lambda+2\mu}\omega^{2} \int_{\Omega_\infty}v_{1}\overline{\psi}_{1}\mathrm{d}x  \\
		&-\frac{i(\lambda+\mu)\sin\theta}{\sqrt{\lambda+2\mu}}\omega \int_{\Omega_\infty}\left[\left(\frac{\partial v_{2}}{\partial x_{2}}+2\frac{\partial v_{1}}{\partial x_{1}}\right)\overline{\psi}_{1}+ \frac{\partial v_{1}}{\partial x_{2}}\overline{\psi}_{2}\right] \mathrm{d}x=0, \quad \forall \, \psi\in V_{{\rm per}}(\Omega_\infty),
\end{align*}
where $\Omega_\infty=\{x\in D, 0<x_{1}<2\pi\}$. We rewrite the above sesquilinear form as a quadratic operator equation:
\begin{equation*}
\mathbf{A}v-\omega \mathbf{B}v-\omega^{2}\mathbf{C}v=0, \quad \forall \, \psi\in V_{{\rm per}}(\Omega_\infty),
\end{equation*}
where the $\omega$-independent operators $\mathbf{A}$, $\mathbf{B}$ and $\mathbf{C}$ are defined by
\be \label{A1}
	\langle \mathbf{A}v,\psi\rangle&=&\int_{\Omega_\infty}E(v,\overline{\psi})\,\mathrm{d}x,\\ \label{B1}
	\langle \mathbf{B}v,\psi\rangle &=&\frac{2i\mu\sin\theta}{\sqrt{\lambda+2\mu}}\int_{\Omega_\infty}\frac{\partial v}{\partial x_{1}}\cdot\overline{\psi}\, \mathrm{d}x +\frac{i(\lambda+\mu)\sin\theta}{\sqrt{\lambda+2\mu}} \int_{\Omega_\infty}\left[\left(\frac{\partial v_{2}}{\partial x_{2}}+2\frac{\partial v_{1}}{\partial x_{1}}\right)\overline{\psi}_{1}+ \frac{\partial v_{1}}{\partial x_{2}}\overline{\psi}_{2}\right] \mathrm{d}x,\\
 \label{C1}
	\langle \mathbf{C}v,\psi\rangle&=&\mu\tau_{1}^{2}\int_{\Omega_\infty}v \cdot \overline{\psi}\,\mathrm{d}x-\frac{(\lambda+\mu)\sin^{2}\theta}{\lambda+2\mu} \int_{\Omega_\infty}v_{1}\overline{\psi}_{1}\mathrm{d}x.
\en
It is easy to prove that $\mathbf{A}$ and $\mathbf{C}$ are self-adjoint bounded operators from $V_{{\rm per}}(\Omega_\infty)$ into itself. By integrating by parts,  for any $b>\Lambda^{+}$ we see
\begin{equation*}
\int_{\Omega_b}\frac{\partial v_{1}}{\partial x_{1}}\overline{\psi}_{1} +\frac{\partial \overline{\psi}_{1}}{\partial x_{1}}v_{1} \mathrm{d}x=\int_{\Omega_b}\frac{\partial v_{2}}{\partial x_{1}}\overline{\psi}_{2} +\frac{\partial \overline{\psi}_{2}}{\partial x_{1}}v_{2}\, \mathrm{d}x=0,
\end{equation*}
and
\begin{equation*}
\int_{\Omega_b}\left(\frac{\partial v_{2}}{\partial x_{2}}\overline{\psi}_{1} +\frac{\partial \overline{\psi}_{1}}{\partial x_{2}}v_{2}\right) +\left(\frac{\partial v_{1}}{\partial x_{2}}\overline{\psi}_{2} +\frac{\partial \overline{\psi}_{2}}{\partial x_{2}}v_{1}\right)\mathrm{d}x=0.
\end{equation*}
Taking $b\rightarrow\infty$, we conclude that $\mathbf{B}$ is also a self-adjoint bounded operator form $V_{{\rm per}}(\Omega_\infty)$ into itself. Note that $\mathbf{A}$ is positive, that is,
\begin{equation*}
\langle \mathbf{A}v, v\rangle=\int_{\Omega_\infty}E(v,\overline{v})\,\mathrm{d}x =\int_{\Omega_\infty} \left[\,\lambda|\nabla\cdot v|^{2}+2\mu|\mathcal{E}(v)|^{2}\, \right]\mathrm{d}x \geq \int_{\Omega_\infty}(\lambda+\mu)|\nabla\cdot v|^{2}\mathrm{d}x >0
\end{equation*}
for all $v \neq 0$, where $\mathcal{E}(v):=[\nabla v+(\nabla v)^{\top}]/2$. Moreover,
\begin{align*}
\langle \mathbf{C}v,v\rangle=&\mu\tau_{1}^{2}\int_{\Omega_\infty}|v|^{2}\mathrm{d}x -\frac{(\lambda+\mu)\sin^{2}\theta}{\lambda+2\mu} \int_{\Omega_\infty}|v_{1}|^{2}\mathrm{d}x \\
	=&\left(\mu\tau_{1}^{2}-\frac{(\lambda+\mu)\sin^{2}\theta}{\lambda+2\mu}\right) \int_{\Omega_\infty}|v_{1}|^{2}\mathrm{d}x +\mu\tau_{1}^{2}\int_{\Omega_\infty}|v_{2}|^{2}\mathrm{d}x \\
	=&\cos^{2}\theta \int_{\Omega_\infty}|v_{1}|^{2}\mathrm{d}x +\mu\tau_{1}^{2}\int_{\Omega_\infty}|v_{2}|^{2}\mathrm{d}x>0, \quad \forall v\neq0.
\end{align*}
Hence, $\mathbf{C}$ is also positive. Now we use that $\mathbf{C}$ has a square root $\mathbf{W}$; that is, a bounded and self-adjoint operator with $\mathbf{W}^2=\mathbf{C}$. Then we have $\mathbf{A}v-\omega \mathbf{B} u-\omega^2 \mathbf{W}^2 v=0$. Setting $V=(v,\omega \mathbf{W}v)^{\top}$, we obtain the equivalent system $\mathcal{A}V=\omega\mathcal{B}V$, where
\begin{equation*}
\mathcal{A}:=\left(\begin{array}{cc}
		\mathbf{A} & 0 \\  0 & I
\end{array}\right), \quad \mathcal{B}:=\left(\begin{array}{cc}
	\mathbf{B} & \mathbf{W} \\  \mathbf{W} & 0
\end{array}\right).
\end{equation*}
The operator matrices $\mathcal{A}$ and $\mathcal{B}$ are both self-adjoint in $V_{{\rm per }}(\Omega_\infty)\times V_{{\rm per}}(\Omega_\infty)$. If $V\neq 0$, then there holds
\begin{equation*}
\omega\langle\mathcal{B} V, V\rangle=\langle\mathcal{A} V, V\rangle
\end{equation*}
which shows that $\omega$ is real valued, contradicting the assumption that ${\rm Im}\, \omega >0$. Hence $V=0$, which gives $v=0$. This proves uniqueness for a complex-valued frequency with a positive imaginary part.
\end{proof}

To apply the singular perturbation argument from Lemma~\ref{LAP}, we set
\( X = V_{\mathrm{per}}(\Omega_b) \) and denote by
\( P : X \to \mathcal{N} \) the projection operator.
Fix the incident angle, and choose \( \omega \in \mathbb{R}^+ \) such that
\( \hat{\alpha}(\omega) \) is not a cut-off value. Consequently, the matrices
\( W_n(\omega) \) are analytic in a neighborhood of \( \omega \).
We now perturb \( \omega \) by introducing a small positive imaginary part, that is,
 \( \omega + i\epsilon \) with \( \epsilon > 0 \).
For notational convenience, we indicate the dependence on \( \epsilon \) by
setting \( f_{\omega+i\epsilon} = f(\epsilon) \) and \( L_{\omega+i\epsilon} = L(\epsilon) \).
With these conventions, for every \( \psi \in X \), we have

\begin{equation} \label{f}
\langle f(\epsilon),\psi\rangle=2i\tau_{2}(\omega+i\epsilon) e^{-ib(\omega+i\epsilon)\cos\theta/\sqrt{\lambda+2\mu}} \int_{0}^{2\pi} \left(\begin{array}{c}
	-\sin\theta \\  \tau_{1}\sqrt{\lambda+2\mu}
\end{array}\right)\cdot\overline{\psi(x_{1},b)}\,\mathrm{d}x_{1},
\end{equation}
\begin{align}  \label{L}
\langle L(\epsilon)v,\psi\rangle =&\int_{\Omega_b}\left(E(v,\overline{\psi})-\mu\tau_{1}^{2}(\omega+i\epsilon)^2v \cdot \overline{\psi}-\frac{2i\mu\sin\theta}{\sqrt{\lambda+2\mu}}(\omega+i\epsilon)\frac{\partial v}{\partial x_{1}}\cdot\overline{\psi}\right) \mathrm{d}x \nonumber \\
	&-\frac{i(\lambda+\mu)\sin\theta}{\sqrt{\lambda+2\mu}}(\omega+i\epsilon) \int_{\Omega_b}\left[\left(\frac{\partial v_{2}}{\partial x_{2}}+2\frac{\partial v_{1}}{\partial x_{1}}\right)\overline{\psi}_{1}+ \frac{\partial v_{1}}{\partial x_{2}}\overline{\psi}_{2}\right] \mathrm{d}x  \nonumber\\
	&+\frac{(\lambda+\mu)\sin^{2}\theta}{\lambda+2\mu}(\omega+i\epsilon)^{2} \int_{\Omega_b}v_{1}\overline{\psi}_{1}\mathrm{d}x +2\pi\sum_{n\in\mathbb{Z}} \overline{\psi}_{n} \cdot \left(W_n(\omega+i\epsilon)\, v_n\right).
\end{align}
From the above expressions we observe that $f(\epsilon)$ and $L(\epsilon)$ are differentiable with respect to $\epsilon$ in a neighborhood of $z=0$.

To apply Lemma~\ref{LAP}, we need to prove the following results.
\begin{lemma}\label{Lem3}
(i) $f(\epsilon)\in\mathcal{R}(L(\epsilon))$ for all $\epsilon>0$ and $F(0)$, $F'(0)\in \mathcal{R}(L(0))$.
	
(ii) $PL'(0)$ is one-to-one on $\mathcal{N}(L(0))$.
\end{lemma}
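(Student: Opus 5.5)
The plan is to verify the two hypotheses of Lemma~\ref{LAP} using the structure of $\mathcal{N}=\mathcal{N}(L(0))$ from Lemma~\ref{Lem-p}: it consists of evanescent modes only, and it is orthogonal to $\mathcal{R}(L(0))$. For (ii), the idea is to reduce $\langle L'(0)v,v\rangle$ to the frequency-independent operators $\mathbf{A},\mathbf{B},\mathbf{C}$ of \eqref{A1}--\eqref{C1} over the infinite cell $\Omega_\infty$.

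\emph{Part (i).} For $\epsilon>0$, Lemma~\ref{Lem-p} (iii) gives that $L(\epsilon)=I-K(\epsilon)$ is injective. Being Fredholm of index zero, it is then onto, so $f(\epsilon)\in\mathcal{R}(L(\epsilon))$.

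For $\epsilon=0$, Lemma~\ref{Lem-p} (ii) gives the orthogonal decomposition $X=\mathcal{N}\oplus\mathcal{R}(L(0))$, so $\mathcal{R}(L(0))=\mathcal{N}^\perp$. It therefore suffices to show $\langle f(0),\phi\rangle=\langle f'(0),\phi\rangle=0$ for every $\phi\in\mathcal{N}$. By \eqref{f},
\[
\langle f(\epsilon),\phi\rangle=c(\epsilon)\,\int_0^{2\pi} a\cdot\overline{\phi(x_1,b)}\,\mathrm{d}x_1 ,
\]
where $c(\epsilon)$ is an explicit scalar, smooth in $\epsilon$, and $a\in\C^2$ is a constant vector. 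Only $\epsilon$ enters, through $c(\epsilon)$. Hence both $\langle f(0),\phi\rangle$ and $\langle f'(0),\phi\rangle$ are scalar multiples of $\int_0^{2\pi}a\cdot\overline{\phi_0}\,\mathrm{d}x_1$, where $\phi_0$ is the zeroth Fourier coefficient of $\phi(\cdot,b)$. In the quasi-periodic variable this is the mode with $\alpha_0=\alpha=k_p\sin\theta$. Since $|\alpha|<k_p<k_s$, both $\beta_0$ and $\gamma_0$ are real. Lemma~\ref{Lem-p} (i) then forces $A_{p,0}=A_{s,0}=0$, and \eqref{un} gives $\phi_0=0$. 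This proves (i).

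\emph{Part (ii).} Since $P$ is the orthogonal projection onto $\mathcal{N}$, the condition $PL'(0)v=0$ with $v\in\mathcal{N}$ means $\langle L'(0)v,\phi\rangle=0$ for all $\phi\in\mathcal{N}$. Taking $\phi=v$ gives $\langle L'(0)v,v\rangle=0$. Because $\epsilon\mapsto L_{\omega+i\epsilon}$ is analytic in $\omega$, we have $L'(0)=i\,\frac{d}{d\omega}L_\omega$.

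The key step, which I expect to be the main obstacle, is computing $\frac{d}{d\omega}\langle L_\omega v,v\rangle$ for the evanescent field $v$. The volume terms over $\Omega_b$ differentiate directly and give $-\langle\mathbf{B}v,v\rangle_{\Omega_b}-2\omega\langle\mathbf{C}v,v\rangle_{\Omega_b}$. The DtN term needs more care. My first attempt would be a Hellmann--Feynman type argument:
\begin{itemize}
\item for evanescent boundary data $g$, write $2\pi\sum_n \overline{g_n}\cdot W_n(\omega)g_n$ as the quadratic form $Q_\omega$ of the Navier operator over the strip $\{x_2>b\}$;
\item evaluate $Q_\omega$ at the exponentially decaying extension $w_\omega$ of $g$, which is obtained by Green's formula using the decay;
\item since $w_\omega$ solves the equation with fixed trace, the $\omega$-derivative of $Q_\omega(w_\omega)$ only hits the explicit $\omega$-dependence.
\end{itemize}
If this is cumbersome, I would instead check it mode by mode with the explicit matrices $W_n$ and the Rayleigh expansion, treating the cases $|\alpha_n|>k_s$ and $k_p<|\alpha_n|<k_s$ separately as in Lemma~\ref{Lem-p}. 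Either way the expected identity is
\[
\langle L'(0)v,v\rangle=-i\big(\langle\mathbf{B}v,v\rangle+2\omega\langle\mathbf{C}v,v\rangle\big),
\]
with the forms taken over $\Omega_\infty$.

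To conclude, $\langle L'(0)v,v\rangle=0$ gives $\langle\mathbf{B}v,v\rangle=-2\omega\langle\mathbf{C}v,v\rangle$. Since $v\in\mathcal{N}$ decays exponentially, it also satisfies the $\Omega_\infty$ variational identity used in the proof of Lemma~\ref{Lem-p} (iii):
\[
\langle\mathbf{A}v,v\rangle-\omega\langle\mathbf{B}v,v\rangle-\omega^2\langle\mathbf{C}v,v\rangle=0 .
\]
Substituting the first relation yields $\langle\mathbf{A}v,v\rangle+\omega^2\langle\mathbf{C}v,v\rangle=0$. Because $\mathbf{A}$ and $\mathbf{C}$ are positive, this forces $v=0$. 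Thus $PL'(0)$ is injective on the finite-dimensional space $\mathcal{N}$, and hence an isomorphism, as Lemma~\ref{LAP} requires.
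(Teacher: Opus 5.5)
Your proposal is correct and follows the paper's proof essentially step by step. For (i) you show that $f(0)$ and $f'(0)$ are orthogonal to the evanescent null space. For (ii) you test $L'(0)v$ against $v$, reduce to $\langle\mathbf{B}v,v\rangle+2\omega\langle\mathbf{C}v,v\rangle=0$, and combine this with the $\Omega_\infty$ variational identity and the positivity of $\mathbf{A}$ and $\mathbf{C}$.

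You are more explicit than the paper in two places. First, you give the right reason why the zeroth Fourier mode of $\psi(\cdot,b)$ vanishes: it is propagating for both P- and S-waves, so Lemma~\ref{Lem-p}~(i) applies, whereas the paper cites (ii). Second, you address the DtN-derivative identity, which the paper calls a ``simple calculation''.

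One caution on your Hellmann--Feynman sketch: it does not work as stated for modes with $k_p<|\alpha_n|<k_s$. There a fixed trace has no decaying extension at nearby real frequencies, because an $O(\omega'-\omega)$ propagating shear component appears. The fix is to run the argument on a truncated strip $b<x_2<h$, keep the flux through $x_2=h$, differentiate, and only then let $h\to\infty$.
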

\begin{proof} (i)
By the definitions of $f(\epsilon)$ and $L(\epsilon)$ in \eqref{f}-\eqref{L}, it follows that
\begin{equation*}
\langle f'(\epsilon),\psi\rangle =2i\tau_{2}\left[i+\frac{\omega+i\epsilon}{\sqrt{\lambda+2\mu}}b\cos\theta\right] e^{-ib(\omega+i\epsilon)\cos\theta/\sqrt{\lambda+2\mu}} \int_{0}^{2\pi} \left(\begin{array}{c}
	-\sin\theta \\  \tau_{1}\sqrt{\lambda+2\mu}
\end{array}\right)\cdot\overline{\psi(x_{1},b)}\,\mathrm{d}x_{1},
\end{equation*}
and
\begin{align*}
\langle L'(\epsilon)v,\psi\rangle =&\int_{\Omega_b}\left(-2i\mu\tau_{1}^{2}(\omega+i\epsilon)v \cdot \overline{\psi}+\frac{2\mu\sin\theta}{\sqrt{\lambda+2\mu}}\frac{\partial v}{\partial x_{1}}\cdot\overline{\psi}\right) \mathrm{d}x \\	&+\frac{(\lambda+\mu)\sin\theta}{\sqrt{\lambda+2\mu}}\int_{\Omega_b}\left[\left(\frac{\partial v_{2}}{\partial x_{2}}+2\frac{\partial v_{1}}{\partial x_{1}}\right)\overline{\psi}_{1}+ \frac{\partial v_{1}}{\partial x_{2}}\overline{\psi}_{2}\right] \mathrm{d}x  \\
	&+2i\frac{(\lambda+\mu)\sin^{2}\theta}{\lambda+2\mu}(\omega+i\epsilon) \int_{\Omega_b}v_{1}\overline{\psi}_{1}\mathrm{d}x +2\pi i\sum_{n\in\mathbb{Z}} \overline{\psi}_{n} \cdot \left(W'_n(\omega+i\epsilon) v_n\right).
\end{align*}
			
(i) Recall from the periodic operator equation \eqref{c0} that $L(\epsilon)$ is the sum of the identity operator and a compact operator.
Together with the uniqueness of Lemma \ref{Lem-p} (iii), this implies that $L(\epsilon)$ is invertible and thus $f(\epsilon)\in \mathcal{R}(L(\epsilon))$ for all $\epsilon>0$. On the other hand, we have $f(0)\in \mathcal{R}(L(0))$, because by Lemma \ref{Lem-p} (i) and (ii), $f(0)$ is orthogonal to $\mathcal{N}(L(0))$ and $V_{{\rm per}}(\Omega_b)$ admits the orthogonal decomposition $V_{{\rm per}}(\Omega_b)=\mathcal{N}(L(0)) \oplus \mathcal{R}(L(0))$.
			
Since the null space $\mathcal{N}(L(0))$ consists of evanescent wave modes only and $(I-P)\phi$ is orthogonal to $\mathcal{N}(L(0))$, it holds that
\begin{equation*}
\langle Pf(0),\psi\rangle=\langle f(0),\psi\rangle=2i\tau_{2}\omega e^{-ib\,\omega\,\cos\theta/\sqrt{\lambda+2\mu}} \int_{0}^{2\pi} \left(\begin{array}{c}
			-\sin\theta \\  \tau_{1}\sqrt{\lambda+2\mu}
\end{array}\right)\cdot\overline{\psi(x_{1},b)}\,\mathrm{d}x_{1}=0,
\end{equation*}
and
\begin{align*}
&\langle Pf'(0),\psi\rangle =\langle f'(0),\psi\rangle  \\ =&2i\tau_{2}\left[i+\frac{\omega}{\sqrt{\lambda+2\mu}}b\cos\theta\right] e^{-ib\,\omega\,\cos\theta/\sqrt{\lambda+2\mu}} \int_{0}^{2\pi} \left(\begin{array}{c}
	-\sin\theta \\  \tau_{1}\sqrt{\lambda+2\mu}
\end{array}\right)\cdot\overline{\psi(x_{1},b)}\,\mathrm{d}x_{1}=0,
\end{align*}
for all $\psi\in \mathcal{N}(L(0))$. Here we have used the fact that $\int_{0}^{2\pi} \overline{\psi(x_{1},b)}\,\mathrm{d}x_{1}=0$ due to Lemma \ref{Lem-p} (ii).
This further implies that $f(0), f'(0)\in\mathcal{R}(L(0))$ and $Pf(0)=Pf'(0)=0$.
			
(ii) Assuming that $\langle PL'(0)v,\cdot\rangle$ vanishes identically on $\mathcal{N}(L(0))$ for some $v\in\mathcal{N}(L(0))$, we need to show that $v\equiv 0$.
			
We extend $v,\psi\in \mathcal{N}(L(0))$ from $\Omega_b$ to $\Omega_\infty$ by the Rayleigh expansion, which we still denote by $v$ and $\psi$.  Since both $v$ and $\psi$ exponentially decay as $x_2\rightarrow \infty$,   simple calculations show that
\be \nonumber	
0&=&\langle PL'(0)v,\psi\rangle=\langle L'(0)v,\psi\rangle  \\ \nonumber &=&\int_{\Omega_\infty}\left(-2i\mu\tau_{1}^{2}\omega v \cdot \overline{\psi}+\frac{2\mu\sin\theta}{\sqrt{\lambda+2\mu}}\frac{\partial v}{\partial x_{1}}\cdot\overline{\psi}\right) \mathrm{d}x +2i\frac{(\lambda+\mu)\sin^{2}\theta}{\lambda+2\mu}\omega \int_{\Omega_\infty}v_{1}\overline{\psi}_{1}\mathrm{d}x \\ \label{PL}
	&&+\frac{(\lambda+\mu)\sin\theta}{\sqrt{\lambda+2\mu}}\int_{\Omega_\infty} \left[\left(\frac{\partial v_{2}}{\partial x_{2}}+2\frac{\partial v_{1}}{\partial x_{1}}\right)\overline{\psi}_{1}+ \frac{\partial v_{1}}{\partial x_{2}}\overline{\psi}_{2}\right] \mathrm{d}x,
\en
for all $v,\psi\in \mathcal{N}(L(0))$.
Choosing $\psi=v$, we obtain
\begin{align*}
\langle PL'(0)v,v\rangle=&\int_{\Omega_\infty}\left(-2i\mu\tau_{1}^{2}\omega |v|^{2}+\frac{2\mu\sin\theta}{\sqrt{\lambda+2\mu}}\frac{\partial v}{\partial x_{1}}\cdot\overline{v}\right) \mathrm{d}x+2i\frac{(\lambda+\mu)\sin^{2}\theta}{\lambda+2\mu}\omega \int_{\Omega_\infty}|v_{1}|^{2}\mathrm{d}x  \\
	&+\frac{(\lambda+\mu)\sin\theta}{\sqrt{\lambda+2\mu}}\int_{\Omega_\infty} \left[\left(\frac{\partial v_{2}}{\partial x_{2}}+2\frac{\partial v_{1}}{\partial x_{1}}\right)\overline{v}_{1}+ \frac{\partial v_{1}}{\partial x_{2}}\overline{v}_{2}\right] \mathrm{d}x=0.
\end{align*}
Recalling the definitions of $\mathbf{A}$, $\mathbf{B}$ and $\mathbf{C}$ in \eqref{A1}-\eqref{C1}, the above relation can be rewritten as
\begin{equation*}
	\langle \mathbf{B}v,v\rangle+2\omega\langle \mathbf{C}v,v\rangle=0.
\end{equation*}
Since $v\in\mathcal{N}(L(0))$ is a solution of the homogeneous problem, we have $\mathbf{A}v- \omega \mathbf{B}v-\omega^{2}\mathbf{C}v=0$. Hence,
\begin{equation*}
	\langle \mathbf{A}v,v\rangle+\omega^{2}\langle \mathbf{C}v,v\rangle=0.
\end{equation*}
Observing that $\mathbf{A}$ and $\mathbf{C}$ are both positive operators, we obtain that $v=0$, proving that $PL'(0)$ is one-to-one on $\mathcal{N}(L(0))$.
\end{proof}
Lemmas \ref{Lem-p}-\ref{Lem3} enable us to apply the singular perturbation Lemma \ref{LAP}. Consequently,  the unique solution $v(\epsilon)$ to \eqref{c0} converges to $v$ in $X$ and the limiting function $v$ fulfills the periodic operator equations
\begin{equation*}
	L_{\omega}v=f_{\omega} \quad {\rm and} \quad  PL'(0)v=0.
\end{equation*}
It is the second equation which provides an additional constraint on $v\in X$ to ensure uniqueness if $\hat{\alpha}(\omega)$ happens to be some propagative wavenumber. This equation can be equivalently written as the following integral identity (cf. \eqref{PL}):
\begin{align} \label{v}
&\int_{\Omega_\infty}\left(-2i\mu\tau_{1}^{2}\omega v \cdot \overline{\psi}+\frac{2\mu\sin\theta}{\sqrt{\lambda+2\mu}}\frac{\partial v}{\partial x_{1}}\cdot\overline{\psi}\right) \mathrm{d}x +2i\frac{(\lambda+\mu)\sin^{2}\theta}{\lambda+2\mu}\omega \int_{\Omega_\infty}v_{1}\overline{\psi}_{1}\mathrm{d}x \nonumber\\
	&+\frac{(\lambda+\mu)\sin\theta}{\sqrt{\lambda+2\mu}}\int_{\Omega_\infty} \left[\left(\frac{\partial v_{2}}{\partial x_{2}}+2\frac{\partial v_{1}}{\partial x_{1}}\right)\overline{\psi}_{1}+ \frac{\partial v_{1}}{\partial x_{2}}\overline{\psi}_{2}\right] \mathrm{d}x=0, \quad \forall\, \psi\in \mathcal{N}.
\end{align}
Setting $u=e^{ik_{p}\sin\theta x_{1}}v$ and $\varphi=e^{ik_{p}\sin\theta x_{1}}\psi$, we return to quasi-periodic settings to get
\begin{equation}\label{orth}
	\frac{\sin\theta}{\sqrt{\lambda+2\mu}} \int_{\Omega_\infty}\left\{2\mu\frac{\partial u}{\partial x_{1}}\cdot\overline{\varphi}+(\lambda+\mu)\left[\left(\frac{\partial u_{2}}{\partial x_{2}}+2\frac{\partial u_{1}}{\partial x_{1}}\right)\overline{\varphi}_{1}+ \frac{\partial u_{1}}{\partial x_{2}}\overline{\varphi}_{2}\right] \right\} \mathrm{d}x =2i\omega\int_{\Omega_\infty}u\cdot\overline{\varphi}\,\mathrm{d}x,
\end{equation}
for all $\varphi\in[H_{{\rm loc}, \alpha,0}^{1}(D) ]^{2}$ such that  $\phi(x)\, e^{-ik_{p}\sin\theta x_{1}} \in \mathcal{N}(L_\omega)$.
\begin{remark}
(i) The improper integrals  on both sides of \eqref{orth} make sense, because by Lemma \ref{Lem-p}, $\psi\in \mathcal{N}(L_\omega)$ consists of exponentially decaying modes.

(ii) In the case where \( \alpha = k_p \sin\theta \) is not a propagative wavenumber, the set \( \mathcal{N}(L_\omega) \) is necessarily empty, so the aforementioned constraint condition \eqref{orth} becomes superfluous.
\end{remark}
The principal findings of this section are summarised below. Define
\begin{equation*}
[H_{{\rm loc},\alpha,0}^1(D)]^2:=\left\{u\in [H_{{\rm loc}}^{1}(D)]^{2}\, :\, u \text{ is } \alpha\text{-quasiperiodic}, \, u=0\, \text {on}\, \partial D \right\}.
\end{equation*}
		
\begin{theorem}\label{THP}
Let $\omega>0$ be fixed and $\theta\in(-\pi/2,\pi/2)$ be an arbitrary incident direction. Set $\alpha=k_{p}\sin\theta$ and suppose that $|\alpha+n|\neq k_{p}$, $|\alpha+n|\neq k_{s}$ for any $n\in\mathbb{Z}$. There exists a unique solution $u\in [H_{{\rm loc},\alpha,0}^1(D)]^2$ such that $u^{\rm sc}:=u-u^{\rm in}$ satisfies the $\alpha$-quasiperiodic Rayleigh expansion \eqref{a4} if $\alpha$ is not a propagative wave number. At a propagative wavenumber, there still exists a unique solution if we additionally enforce the following constraint condition
\begin{equation}\label{integral-p}
	\alpha \int_{\Omega_\infty}\left\{2\mu\frac{\partial u}{\partial x_{1}}\cdot\overline{\varphi}+(\lambda+\mu)\left[\left(\frac{\partial u_{2}}{\partial x_{2}}+2\frac{\partial u_{1}}{\partial x_{1}}\right)\overline{\varphi}_{1}+ \frac{\partial u_{1}}{\partial x_{2}}\overline{\varphi}_{2}\right] \right\} \mathrm{d}x =2i\omega^2\int_{\Omega_\infty}u\cdot\overline{\varphi}\,\mathrm{d}x,
\end{equation}
for all $\varphi\in[H_{{\rm loc}, \alpha,0}^{1}(D) ]^{2}$ such that $e^{-ik_{p}\sin\theta\, x_{1}}\varphi \in \mathcal{N}$.
\end{theorem}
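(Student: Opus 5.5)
Here is how I would prove Theorem \ref{THP}. The non-resonant case follows from item (ii) of the collected well-posedness results. If $\alpha$ is not a propagative wavenumber, $\mathcal{N}(L_\omega)=\{0\}$, so $L_\omega=I-K_\omega$ is injective. By the Fredholm alternative it is then invertible on $X=V_{\rm per}(\Omega_b)$. The unique solution $v$ of \eqref{c0} gives $u=e^{i\alpha x_1}v$ on $\Omega_b$. We extend it to $D$ by the Rayleigh expansion \eqref{a4} above $\Gamma_b$, using the coefficients read off from the Fourier coefficients of $u^{\rm sc}|_{\Gamma_b}$ via \eqref{un}. The equivalence of (DP) and \eqref{var0} (through the DtN map) then shows that $u\in[H^1_{{\rm loc},\alpha,0}(D)]^2$ solves the problem, and that any solution of (DP) restricts to a solution of \eqref{var0}. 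This gives both existence and uniqueness.

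In the resonant case I would apply Lemma \ref{LAP} with $X=V_{\rm per}(\Omega_b)$, $L(\epsilon)=L_{\omega+i\epsilon}$ and $f(\epsilon)=f_{\omega+i\epsilon}$. Its hypotheses are verified as follows:
\begin{itemize}
\item compactness of $K(\epsilon)$ holds by the compact embedding into $[L^2_{\rm per}(\Omega_b)]^2$;
\item $C^1$-dependence on $\epsilon$ holds because $\hat\alpha(\omega)$ is not a cut-off value, so each $W_n$ is analytic near $\omega$;
\item invertibility of $L(\epsilon)$ for $\epsilon>0$ is Lemma \ref{Lem-p}(iii);
\item Riesz number one is Lemma \ref{Lem-p}(ii);
\item $f(0)\in\mathcal{R}(L(0))$ is Lemma \ref{Lem3}(i);
\item $PL'(0)|_{\mathcal{N}}$ is one-to-one by Lemma \ref{Lem3}(ii), and since $\mathcal{N}$ is finite dimensional by Lemma \ref{Lem-p}(i), it is an isomorphism of $\mathcal{N}$.
\end{itemize}
Lemma \ref{LAP} then shows that $v(0)=\lim_{\epsilon\to0^+}v(\epsilon)$ exists and is the unique solution of $L(0)v=f(0)$, $PL'(0)v=Pf'(0)$. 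By Lemma \ref{Lem3}(i), $Pf'(0)=0$.

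Next I would translate this system into the statement. Since $\mathcal{N}=\mathcal{N}(L_\omega^*)$ and the decomposition is orthogonal, $P$ is the orthogonal projection. Hence $PL'(0)v=0$ is equivalent to $\langle L'(0)v,\psi\rangle=0$ for all $\psi\in\mathcal{N}$. For such $\psi$ only the exponentially decaying Fourier modes are present. The key computation is that the boundary term $2\pi i\sum_n\overline{\psi}_n\cdot W_n'(\omega)v_n$ on $\Gamma_b$ equals the derivative in $\omega$ of the corresponding volume integrals over $\Omega_\infty\setminus\Omega_b$, computed with the Rayleigh extensions. This identification is exactly what turns the $\Omega_b$-integrals in $L'(0)$ into the $\Omega_\infty$-integrals in \eqref{PL}. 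I expect this to be the main obstacle. I would first verify it mode by mode: for an evanescent mode, the DtN symbol encodes the variational form on the half-strip $x_2>b$, and differentiating that form with respect to $\omega$ (including the $\omega$-dependence of $\alpha=\omega\sin\theta/\sqrt{\lambda+2\mu}$, which contributes the $\mathbf B$-type terms) gives the matching expression. This is also where the condition on $v$ must be handled with care, because $v$ itself generally contains propagating modes. I would check that these modes pair to zero with the purely evanescent $\psi$ through orthogonality of the Fourier modes in $x_1$. With this in hand, we obtain \eqref{v}. Substituting $u=e^{i\alpha x_1}v$ and $\varphi=e^{i\alpha x_1}\psi$, and multiplying through by $\omega$ with $\alpha=\omega\sin\theta/\sqrt{\lambda+2\mu}$, yields \eqref{orth} and then \eqref{integral-p}.

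Finally, uniqueness under the constraint follows from the uniqueness part of Lemma \ref{LAP}, transferred back through the equivalence of (DP) with \eqref{c0}. To make this explicit: two solutions differ by some $w\in\mathcal{N}$ satisfying $PL'(0)w=0$, so $w=0$ by Lemma \ref{Lem3}(ii). Existence is given by the limit $v(0)$, extended to $D$ by the Rayleigh expansion as in the first paragraph.
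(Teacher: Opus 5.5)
Your overall route is the paper's: Lemma~\ref{LAP} with hypotheses from Lemmas~\ref{Lem-p} and~\ref{Lem3}, $Pf'(0)=0$, and uniqueness from Lemma~\ref{Lem3}(ii). The uniqueness half is fine, since the difference of two constrained solutions lies in $\mathcal{N}$ and the constraint is linear. You also rightly see that existence needs $\langle L'(0)v,\psi\rangle$ to equal the $\Omega_\infty$-integral of \eqref{PL} for the limit $v=v(0)$, which is \emph{not} in $\mathcal{N}$; the paper covers this only with ``cf.~\eqref{PL}''.

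The gap is in how you treat the propagating part of $v$: those modes do not, in general, pair to zero with $\psi\in\mathcal{N}$ by orthogonality in $x_1$. Orthogonality only disposes of indices with $|\alpha_n|<k_p$, where $\psi_n=0$; this includes the incident field at $n=0$. For an index with $k_p<|\alpha_n|<k_s$, an element of $\mathcal{N}$ may have $A_{p,n}\neq0$, since only $A_{s,n}=0$ is forced (Case (ii) in the proof of Lemma~\ref{Lem-p}). The scattering solution $v$ generally carries a propagating shear mode at that same $x_1$-frequency. The resulting cross terms, in $2\pi i\,\overline{\psi}_n\cdot W_n'(\omega)v_n$ and in the volume integrals over $U:=\Omega_\infty\setminus\Omega_b$, are in general nonzero. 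They are merely integrable, because $\psi$ decays.

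The identification still holds with these terms kept on both sides, but the reason is a Green's-formula cancellation, not orthogonality. Work with one index $n$ at a time.

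For a trace $g$ on $\Gamma_b$, let $R_\omega g$ be its outgoing Rayleigh extension into $U$. Let $a^U_\omega$ be the periodic version (via $u=e^{i\alpha x_1}z$, $\alpha=\alpha(\omega)$) of $\int_U(E(u,\overline{\varphi})-\omega^2u\cdot\overline{\varphi})\,\mathrm{d}x$. Betti's formula gives $a^U_\omega(R_\omega g,w)=2\pi\sum_n\overline{w}_n\cdot W_n(\omega)g_n$ for exponentially decaying $w$ and all $\omega$ in a small complex neighbourhood of the real frequency. Differentiating with $g$ and $w$ fixed gives
\[
2\pi\sum_n\overline{w}_n\cdot W_n'(\omega)g_n=(\partial_\omega a^U_\omega)(R_\omega g,w)+a^U_\omega(\partial_\omega R_\omega g,w).
\]

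Take $w=\tilde\psi$, the extension of $\psi\in\mathcal{N}$. Then the last term vanishes: since the form is Hermitian for real $\omega$, write it as $\overline{a^U_\omega(\tilde\psi,\partial_\omega R_\omega g)}$ and integrate by parts. This works because $\partial_\omega R_\omega g=0$ on $\Gamma_b$ and grows at most linearly in $x_2$, while $\tilde\psi$ solves the Navier equation and decays exponentially.

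Hence, for every $v\in X$, the $\Gamma_b$-term of $L'(0)$ (which is $i\,\partial_\omega$ of the boundary pairing) equals the coefficient derivative of the volume form over $U$, evaluated at the Rayleigh extensions. Together with the $\Omega_b$-part, this yields \eqref{v}, and thus \eqref{integral-p}, for $v(0)$. Your mode-by-mode check must therefore keep the same-index compressional/shear cross terms rather than discard them.
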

\begin{proof}
By Lemma \ref{Lem-p} (i), existence of $u \in [H_{{\rm loc}, \alpha,0}^{1}(D) ]^{2}$ follows from the Fredholm alternative and uniqueness holds true if $\alpha$ is not a propagative wave number. If $\alpha$ is a propagative wavenumber, we assume there are two solutions $u_{1}$ and $u_{2}$. Set $w=u_{1}-u_{2}$. It then follows from the limiting absorption argument that the periodic function $v(x)=e^{-i\alpha\cdot x}w(x)\in\mathcal{N}$ fulfills the relation \eqref{v}, that is, $\langle PL^{\prime}(0)v, \psi\rangle=0$ for all $\psi\in\mathcal{N}$. Applying Lemma \ref{Lem3} (ii) yields $v=0$ and thus $w(x)=v(x)e^{i\alpha\cdot x}=0$.
\end{proof}
		
\subsection{Incident shear wave}\label{subsec:shear}
		
In this part, we consider the incident shear plane wave of the form $u^{\text{in}}=u_s^{\text{in}}(x) =\hat{\theta}^{\perp} \exp \left(i k_s \hat{\theta} \cdot x\right)$. Set $\alpha=k_s\sin\theta$ and suppose that the incident angle satisfies the relation
\begin{equation}\label{assu}
	\sin^{2}\theta<\frac{\mu}{\lambda+2\mu}.
\end{equation}
Under this assumption, we have
\ben		\beta_0=\sqrt{k_p^2-\alpha^2}=\omega\sqrt{\frac{1}{\lambda+2\mu}-\frac{1}{\mu}\sin^2\theta} >0.
\enn
By the representation theorem of Riesz, there exist $f_{\omega}\in V_{{\rm per}}(\Omega_b)$ such that
\begin{equation*}
\langle f_{\omega},\psi\rangle=-\frac{2i\gamma_0 k_s \mu}{d_0}e^{-i\gamma_0 b} \int_{0}^{2\pi}  \left(\begin{array}{c}
 \beta_0 \\  \alpha	\end{array}\right)\cdot\overline{\psi(x_{1},b)}\,\mathrm{d}x_{1} 
=-2i\tau_{4}\omega e^{-ib\omega\cos\theta/\sqrt{\mu}} \int_{0}^{2\pi}  \left(\begin{array}{c}
 \tau_{3} \\  \sin\theta	\end{array}\right)\cdot\overline{\psi(x_{1},b)}\,\mathrm{d}x_{1},
\end{equation*}
with
\begin{equation}\label{tau34}
\tau_{3}:=\sqrt{\frac{\mu-(\lambda+2\mu)\sin^{2}\theta}{\lambda+2\mu}}>0, \quad \tau_{4}:=\frac{\sqrt{\mu}\cos\theta}{\sin^{2}\theta+\tau_{3}\cos\theta}>0.
\end{equation}
Analogously, one can find a linear bounded operator $L_{\omega}$ from $V_{{\rm per}}(\Omega_b)$ into itself such that
\begin{align*}
\langle L_{\omega}v,\psi\rangle=&B_{\omega}(v,\psi)=\int_{\Omega_b}\left(E(v, \overline{\psi})-(\omega^2-\mu\alpha^{2})v \cdot \overline{\psi}-2i\alpha\mu\frac{\partial v}{\partial x_{1}}\cdot\overline{\psi}\right) \mathrm{d}x-\int_{\Gamma_b} \overline{\psi} \cdot \mathscr{T}_{\omega}v \mathrm{~d}s \\		&-i\alpha(\lambda+\mu)\int_{\Omega_b}\left[\left(\frac{\partial v_{2}}{\partial x_{2}}+2\frac{\partial v_{1}}{\partial x_{1}}\right)\overline{\psi}_{1}+ \frac{\partial v_{1}}{\partial x_{2}}\overline{\psi}_{2}\right] \mathrm{d}x +(\lambda+\mu)\int_{\Omega_b}\alpha^{2}v_{1}\overline{\psi}_{1}\mathrm{d}x  \\
	=&\int_{\Omega_b}\left(E(v,\overline{\psi})-\omega^{2}\cos^{2}\theta\, v \cdot \overline{\psi}-2i\sqrt{\mu}\sin\theta\,\omega\frac{\partial v}{\partial x_{1}}\cdot\overline{\psi}\right) \mathrm{d}x  \\
	&-\frac{i(\lambda+\mu)\sin\theta}{\sqrt{\mu}}\omega \int_{\Omega_b}\left[\left(\frac{\partial v_{2}}{\partial x_{2}}+2\frac{\partial v_{1}}{\partial x_{1}}\right)\overline{\psi}_{1}+ \frac{\partial v_{1}}{\partial x_{2}}\overline{\psi}_{2}\right] \mathrm{d}x  \nonumber\\
	&+\frac{(\lambda+\mu)\sin^{2}\theta}{\mu}\omega^{2} \int_{\Omega_b}v_{1}\overline{\psi}_{1}\mathrm{d}x +2\pi\sum_{n\in\mathbb{Z}} \overline{\psi}_{n} \cdot \left(W_n(\omega) v_n\right),
\end{align*}
where $v_n$ and $w_n$ denote the Fourier coefficients of $v$ and $w$ on $\Gamma_b$, respectively.
The analogue of Lemma \ref{Lem-p} in the shear plane wave case is stated below. 		
\begin{lemma} \label{Lem-s}
Let $\alpha=k_s\sin\theta$, suppose that $|\alpha_{n}|\neq k_{p}$ and $|\alpha_{n}|\neq k_{s}$ for any $n\in\mathbb{Z}$.
			
(i)The null space $\mathcal{N}:=\mathcal{N}(L_{\omega})=\mathcal{N}(L_{\omega}^{*})$ is finite dimensional and consists of exponentially decaying modes.
			
(ii) The Riesz number of $L_{\omega}$ is one, that is, $\mathcal{N}(L_{\omega}) =\mathcal{N}(L_{\omega}^{2})$. Moreover, there holds the orthogonal decomposition $V_{{\rm per}}=\mathcal{N}(L_{\omega})\oplus\mathcal{R}(L_{\omega})$.
			
(iii) If ${\rm Im}\,\omega>0$ and $\sin^{2}\theta<\frac{\mu}{\lambda+2\mu}$, there is a unique solution to \eqref{c0} for any $b>\Lambda^+$.
\end{lemma}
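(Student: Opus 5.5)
The plan is to follow the proof of Lemma~\ref{Lem-p} step by step, pointing out where the shear case differs. For (i), take $v\in\mathcal{N}(L_\omega)$, set $u=e^{i\alpha x_1}v$, test the quasi-periodic form \eqref{var0} with $\varphi=u$, and take imaginary parts. As in \cite[Lemma 4]{EH2010}, only the propagating modes contribute:
\[0=2\pi\omega^2\Big(\sum_{|\alpha_n|<k_p}\beta_n|A_{p,n}|^2+\sum_{|\alpha_n|<k_s}\gamma_n|A_{s,n}|^2\Big).\]
Hence all propagating Rayleigh coefficients vanish. By Assumption~\ref{assumption} no $\beta_n,\gamma_n$ vanishes, so $u$ consists of exponentially decaying modes only. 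Finite dimensionality follows from the Fredholm property of $I-K_\omega$.

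For $\mathcal{N}(L_\omega)=\mathcal{N}(L_\omega^*)$, the argument is the same. The representations \eqref{Lw} and \eqref{Lw*} reduce the claim to checking $\overline{W_n(\omega)}^\top u_n=W_n(\omega)u_n$ for the evanescent indices. There are two cases. When $|\alpha_n|>k_s$, $W_n$ is Hermitian-symmetric outright. When $k_p<|\alpha_n|<k_s$, one uses $A_{s,n}=0$ together with \eqref{Wn}. These computations do not depend on whether $\alpha=k_p\sin\theta$ or $\alpha=k_s\sin\theta$, so they carry over verbatim.

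Part (ii) is the same abstract argument as before. If $L_\omega^2w=0$, put $v=L_\omega w$. Then $\|v\|^2=\langle L_\omega^*v,w\rangle=0$. Orthogonality of $\mathcal{N}$ and $\mathcal{R}(L_\omega)$ follows from $\mathcal{N}=\mathcal{N}(L^*_\omega)$.

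Part (iii) needs two ingredients. First, I will show that ${\rm Im}\,\beta_n>0$ and ${\rm Im}\,\gamma_n>0$ when $\omega=\omega_0+i\varepsilon$, now with $\alpha_n=n+\omega\sin\theta/\sqrt\mu$. For $\gamma_n$, assume ${\rm Im}\,\gamma_n^2\le0$. This gives $\omega_0/\mu\le \sin\theta(n+\omega_0\sin\theta/\sqrt\mu)/\sqrt\mu$. Substituting, ${\rm Re}\,\gamma_n^2\le -\varepsilon^2\cos^2\theta/\mu+(\omega_0^2/\mu)(1-1/\sin^2\theta)<0$, so \eqref{imag} applies.

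For $\beta_n$, assume ${\rm Im}\,\beta_n^2\le0$. This gives $\omega_0/(\lambda+2\mu)\le \sin\theta(n+\omega_0\sin\theta/\sqrt\mu)/\sqrt\mu$, hence $(n+\cdot)^2\ge \mu\omega_0^2/((\lambda+2\mu)^2\sin^2\theta)$. Then ${\rm Re}\,\beta_n^2\le \omega_0^2/(\lambda+2\mu)-\varepsilon^2/(\lambda+2\mu)+\varepsilon^2\sin^2\theta/\mu-\mu\omega_0^2/((\lambda+2\mu)^2\sin^2\theta)$. Negativity of this bound is exactly where \eqref{assu} enters: $\sin^2\theta<\mu/(\lambda+2\mu)$ makes both the $\omega_0^2$ term and the $\varepsilon^2$ term negative. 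I expect this sign bookkeeping to be the main place where the hypothesis is genuinely needed.

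The second ingredient is the uniqueness argument. A kernel element then decays exponentially, so the problem can be posed on $\Omega_\infty$ and written as $\mathbf{A}v-\omega\mathbf{B}v-\omega^2\mathbf{C}v=0$. Here $\mathbf{A}$ is as in \eqref{A1}, $\mathbf{B}$ is \eqref{B1} with the factor $1/\sqrt{\lambda+2\mu}$ replaced by $1/\sqrt\mu$, and
\[\langle\mathbf{C}v,\psi\rangle=\cos^2\theta\int v\cdot\bar\psi-\frac{(\lambda+\mu)\sin^2\theta}{\mu}\int v_1\bar\psi_1.\]
$\mathbf{A}$ and $\mathbf{C}$ are self-adjoint and $\mathbf{B}$ is self-adjoint by the same integrations by parts. $\mathbf{A}$ is positive as before. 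The new obstacle is positivity of $\mathbf{C}$. The coefficient of $\int|v_1|^2$ is $\cos^2\theta-(\lambda+\mu)\sin^2\theta/\mu=(\mu-(\lambda+2\mu)\sin^2\theta)/\mu=\tau_3^2(\lambda+2\mu)/\mu>0$ by \eqref{assu}, and the coefficient of $\int|v_2|^2$ is $\cos^2\theta>0$. So $\mathbf{C}>0$ and has a self-adjoint square root $\mathbf{W}$. Linearizing with $V=(v,\omega\mathbf{W}v)$ gives $\mathcal{A}V=\omega\mathcal{B}V$ with self-adjoint $\mathcal{A},\mathcal{B}$. Then $\omega\langle\mathcal{B}V,V\rangle=\langle\mathcal{A}V,V\rangle$, which is real and positive for $V\ne0$, so $\omega$ would be real. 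This contradicts ${\rm Im}\,\omega>0$, so $v=0$. Invertibility then follows from the Fredholm alternative.
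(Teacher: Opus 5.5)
Your proposal is correct and follows the paper's proof essentially step by step. Parts (i) and (ii) are reduced to the arguments of Lemma~\ref{Lem-p}, and part (iii) uses the same sign analysis of ${\rm Im}\,\beta_n^2$ and ${\rm Im}\,\gamma_n^2$ (with \eqref{assu} entering for $\beta_n$), followed by the same quadratic-pencil linearization with $\widetilde{\mathbf{C}}$ positive under \eqref{assu}.
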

\begin{proof}
The assertions (i) and (ii) can be proved following the same arguments used in the proof of Lemma \ref{Lem-p}. Below we only verify the third assertion.
			
(iii) Denoting $\omega:=\omega_{0}+i\varepsilon$ with $\varepsilon>0$, then
\begin{align*}
	\beta_{n}^{2}=&k_{p}^{2}-\alpha_{n}^{2}=\frac{\omega^{2}}{\lambda+2\mu} -\left(n+\frac{\omega}{\sqrt{\mu}}\sin\theta\right)^{2} \\
	=&\frac{\omega_{0}^{2}-\varepsilon^{2}}{\lambda+2\mu} -\left(n+\frac{\omega_{0}\sin\theta}{\sqrt{\mu}}\right)^{2} +\frac{\varepsilon^{2}\sin^{2}\theta}{\mu} +\frac{2i\varepsilon\omega_{0}}{\lambda+2\mu} -\frac{2i\varepsilon\sin\theta}{\sqrt{\mu}} \left(n+\frac{\omega_{0}\sin\theta}{\sqrt{\mu}}\right),
\end{align*}
and
\begin{align*}
	\gamma_{n}^{2}=&k_{s}^{2}-\alpha_{n}^{2}=\frac{\omega^{2}}{\mu} -\left(n+\frac{\omega}{\sqrt{\mu}}\sin\theta\right)^{2} \\
	=&\frac{\omega_{0}^{2}-\varepsilon^{2}}{\mu} -\left(n+\frac{\omega_{0}\sin\theta}{\sqrt{\mu}}\right)^{2} +\frac{\varepsilon^{2}\sin^{2}\theta}{\mu} +\frac{2i\varepsilon\omega_{0}}{\mu} -\frac{2i\varepsilon\sin\theta}{\sqrt{\mu}} \left(n+\frac{\omega_{0}\sin\theta}{\sqrt{\mu}}\right),
\end{align*}
that is,
\ben
	{\rm Re}\beta_{n}^{2}&=&\frac{\omega_{0}^{2}-\varepsilon^{2}}{\lambda+2\mu} -\left(n+\frac{\omega_{0}\sin\theta}{\sqrt{\mu}}\right)^{2} +\frac{\varepsilon^{2}\sin^{2}\theta}{\mu}, \quad
	{\rm Im}\beta_{n}^{2}=\frac{2\varepsilon\omega_{0}}{\lambda+2\mu} -\frac{2\varepsilon\sin\theta}{\sqrt{\mu}} \left(n+\frac{\omega_{0}\sin\theta}{\sqrt{\mu}}\right),\\
	{\rm Re}\gamma_{n}^{2}&=&\frac{\omega_{0}^{2}-\varepsilon^{2}}{\mu} -\left(n+\frac{\omega_{0}\sin\theta}{\sqrt{\mu}}\right)^{2} +\frac{\varepsilon^{2}\sin^{2}\theta}{\mu}, \quad
	{\rm Im}\gamma_{n}^{2}=\frac{2\varepsilon\omega_{0}}{\mu} -\frac{2\varepsilon\sin\theta}{\sqrt{\mu}} \left(n+\frac{\omega_{0}\sin\theta}{\sqrt{\mu}}\right).
\enn
			
Assume that ${\rm Im}\beta_{n}^{2}\leq0$, then we have
\begin{equation*}
	0<\frac{\varepsilon\omega_{0}}{\lambda+2\mu} \leq \frac{\varepsilon\sin\theta}{\sqrt{\mu}} \left(n+\frac{\omega_{0}\sin\theta}{\sqrt{\mu}}\right),
\end{equation*}
which implies that
\begin{align*}
{\rm Re}\beta_{n}^{2}\leq& \frac{\omega_{0}^{2}-\varepsilon^{2}}{\lambda+2\mu} +\frac{\varepsilon^{2}\sin^{2}\theta}{\mu} -\frac{\mu\omega_{0}^{2}}{(\lambda+2\mu)^{2}\sin^{2}\theta} \\
	=& \frac{\omega_{0}^{2}}{\lambda+2\mu} \left(1-\frac{\mu}{(\lambda+2\mu)\sin^{2}\theta}\right) + \varepsilon^{2}\left(\frac{\sin^{2}\theta}{\mu}-\frac{1}{\lambda+2\mu} \right)<0,
\end{align*}
where we have used the assumption \eqref{assu}.
This proves either ${\rm Re}\beta_{n}^{2}<0$ or ${\rm Im}\beta_{n}^{2}>0$. Hence, one must have ${\rm Im}\beta_{n}>0$.
			
Assume that ${\rm Im}\gamma_{n}^{2}\leq0$, then we have
\begin{equation*}
0<\frac{\varepsilon\omega_{0}}{\mu} \leq\frac{\varepsilon\sin\theta}{\sqrt{\mu}} \left(n+\frac{\omega_{0}\sin\theta}{\sqrt{\mu}}\right),
\end{equation*}
which implies that
\begin{align*}
{\rm Re}\gamma_{n}^{2}\leq\frac{\omega_{0}^{2}-\varepsilon^{2}}{\mu} +\frac{\varepsilon^{2}\sin^{2}\theta}{\mu} -\frac{\omega_{0}^{2}}{\mu\sin^{2}\theta} =\frac{\omega_{0}^{2}}{\mu}\left(1-\frac{1}{\sin^{2}\theta}\right) -\frac{\varepsilon^{2}}{\mu}\cos^{2}\theta <0.
\end{align*}
This proves that ${\rm Im}\, \gamma_{n}>0$ for all $n\in\mathbb{Z}$ for the same reason.
			
Let $v\in V_{{\rm per}}(\Omega_\infty)$ be a solution of the homogeneous problem \eqref{a0}, \eqref{a1}, \eqref{a4} with ${\rm Im}\,\omega>0$. Hence, by the Rayleigh expansion \eqref{a4}, $v$ must exponetially decay in the $x_{2}$-direction, allowing a variational formulation over $V_{{\rm per}}(\Omega_\infty)$:
\begin{align*}
	&\int_{\Omega_\infty}\left(E(v,\overline{\psi})-\omega^{2}\cos^{2}\theta\, v \cdot \overline{\psi}-2i\sqrt{\mu}\sin\theta\,\omega\frac{\partial v}{\partial x_{1}}\cdot\overline{\psi}\right) \mathrm{d}x+\frac{(\lambda+\mu)\sin^{2}\theta}{\mu}\omega^{2} \int_{\Omega_\infty}v_{1}\overline{\psi}_{1}\mathrm{d}x  \\
	&-\frac{i(\lambda+\mu)\sin\theta}{\sqrt{\mu}}\omega \int_{\Omega_\infty}\left[\left(\frac{\partial v_{2}}{\partial x_{2}}+2\frac{\partial v_{1}}{\partial x_{1}}\right)\overline{\psi}_{1}+ \frac{\partial v_{1}}{\partial x_{2}}\overline{\psi}_{2}\right] \mathrm{d}x=0.
\end{align*}
This can be written as
\begin{equation*}
\mathbf{\widetilde{A}}v-\omega \mathbf{\widetilde{B}}v-\omega^{2}\mathbf{\widetilde{C}}v=0
\end{equation*}
where $\mathbf{\widetilde{A}}$, $\mathbf{\widetilde{B}}$ and $\mathbf{\widetilde{C}}$ are defined by
\be \label{A2}
\langle \mathbf{\widetilde{A}}v,\psi\rangle &=&\int_{\Omega_\infty}E(v,\overline{\psi})\,\mathrm{d}x,
	\\  \label{B2}
	\langle \mathbf{\widetilde{B}}v,\psi\rangle &=&2i\sqrt{\mu}\sin\theta\int_{\Omega_\infty}\frac{\partial v}{\partial x_{1}}\cdot\overline{\psi}\,\mathrm{d}x			+\frac{i(\lambda+\mu)\sin\theta}{\sqrt{\mu}}\int_{\Omega_\infty}\left[\left(\frac{\partial v_{2}}{\partial x_{2}}+2\frac{\partial v_{1}}{\partial x_{1}}\right)\overline{\psi}_{1}+ \frac{\partial v_{1}}{\partial x_{2}}\overline{\psi}_{2}\right] \mathrm{d}x,
	\\ \label{C2}
\langle \mathbf{\widetilde{C}}v,\psi\rangle &=&\cos^{2}\theta\int_{\Omega_\infty}v\cdot\overline{\psi} \,\mathrm{d}x-\frac{(\lambda+\mu)\sin^{2}\theta}{\mu} \int_{\Omega_\infty}v_{1}\overline{\psi}_{1}\,\mathrm{d}x.
\en
It is easy to prove that $\mathbf{\widetilde{A}}$, $\mathbf{\widetilde{B}}$ and $\mathbf{\widetilde{C}}$ are self-adjoint bounded operators from $V_{{\rm per}}(\Omega_\infty)$ into itself. Note that $\mathbf{\widetilde{A}}$ is positive, that is, $\langle \mathbf{\widetilde{A}}v, v\rangle>0$ for all $v \neq 0$. Moreover,
\begin{align*}
\langle \mathbf{\widetilde{C}}v,v\rangle=&\cos^{2}\theta\int_{\Omega_\infty}|v|^{2} \mathrm{d}x -\frac{(\lambda+\mu)\sin^{2}\theta}{\mu} \int_{\Omega_\infty}|v_{1}|^{2}\mathrm{d}x \\				=&\frac{\mu-(\lambda+2\mu)\sin^{2}\theta}{\mu}\int_{\Omega_\infty}|v_{1}|^{2}\mathrm{d}x +\cos^{2}\theta\int_{\Omega_\infty}|v_{2}|^{2}\mathrm{d}x>0, \quad \forall\, v\neq0,
\end{align*}
where we have again used the assumption \eqref{assu}. Hence, $\mathbf{\widetilde{C}}$ is also positive. By arguing analogously to the proof of Lemma \ref{Lem-p} (iii), we obtain $v\equiv 0$.
\end{proof}
		
In the following, we set $X=V_{{\rm per}}(\Omega_\infty)$ and denote by $P:X\rightarrow \mathcal{N}$ the projection operator. Write $f(\epsilon)=f_{\omega+i\epsilon}$ and $L(\epsilon)=L_{\omega+i\epsilon}$. For $\psi\in X$, it follows from the definitions of $f_{\omega}$ and $L_{\omega}$ that
\begin{equation} \label{f2}
\langle f(\epsilon),\psi\rangle=-2i\tau_{4}(\omega+i\epsilon) e^{-ib(\omega+i\epsilon)\cos\theta/\sqrt{\mu}} \int_{0}^{2\pi}  \left(\begin{array}{c}
	\tau_{3} \\  \sin\theta
\end{array}\right)\cdot\overline{\psi(x_{1},b)}\,\mathrm{d}x_{1},
\end{equation}
\begin{align}   \label{L2}
\langle L(\epsilon)v,\psi\rangle=&\int_{\Omega_b}\left(E(v,\overline{\psi}) -(\omega+i\epsilon)^{2}\cos^{2}\theta\, v \cdot \overline{\psi}-2i\sqrt{\mu}\sin\theta\,(\omega+i\epsilon)\frac{\partial v}{\partial x_{1}}\cdot\overline{\psi}\right) \mathrm{d}x  \nonumber\\
	&-\frac{i(\lambda+\mu)\sin\theta}{\sqrt{\mu}}(\omega+i\epsilon) \int_{\Omega_b}\left[\left(\frac{\partial v_{2}}{\partial x_{2}}+2\frac{\partial v_{1}}{\partial x_{1}}\right)\overline{\psi}_{1}+ \frac{\partial v_{1}}{\partial x_{2}}\overline{\psi}_{2}\right] \mathrm{d}x  \nonumber\\
	&+\frac{(\lambda+\mu)\sin^{2}\theta}{\mu}(\omega+i\epsilon)^{2} \int_{\Omega_b}v_{1}\overline{\psi}_{1}\mathrm{d}x +2\pi\sum_{n\in\mathbb{Z}} \overline{\psi}_{n} \cdot \left(W_n(\omega+i\epsilon) v_n\right).
\end{align}
In order to apply Lemma \ref{LAP} to the shear plane wave case, we still need to prove the following results.
\begin{lemma}\label{lem5}
(i) $f(\epsilon)\in\mathcal{R}(L(\epsilon))$ for all $\epsilon>0$ and $f(0)$, $f'(0)\in \mathcal{R}(L(0))$.
			
(ii) $PL'(0)$ is one-to-one on $\mathcal{N}(L(0))$.
\end{lemma}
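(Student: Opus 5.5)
The plan is to mirror the proof of Lemma \ref{Lem3}, replacing the pressure-wave operators by their shear analogues $\mathbf{\widetilde{A}}$, $\mathbf{\widetilde{B}}$, $\mathbf{\widetilde{C}}$ from \eqref{A2}--\eqref{C2}, and working under the standing assumption \eqref{assu}.

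\emph{Part (i).} By \eqref{L2}, $L(\epsilon)$ is the identity plus a compact operator. Lemma \ref{Lem-s} (iii) gives uniqueness for $\epsilon>0$, so by the Fredholm alternative $L(\epsilon)$ is invertible and $f(\epsilon)\in\mathcal{R}(L(\epsilon))$.

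For $\epsilon=0$, differentiate \eqref{f2}:
\[
\langle f'(\epsilon),\psi\rangle=-2i\tau_4\Big[i+\tfrac{(\omega+i\epsilon)b\cos\theta}{\sqrt{\mu}}\Big]e^{-ib(\omega+i\epsilon)\cos\theta/\sqrt{\mu}}\int_0^{2\pi}\begin{pmatrix}\tau_3\\ \sin\theta\end{pmatrix}\cdot\overline{\psi(x_1,b)}\,\mathrm{d}x_1 .
\]
Thus both $f(0)$ and $f'(0)$ act on $\psi$ only through the zeroth Fourier coefficient $\psi_0$ of $\psi|_{\Gamma_b}$.

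By Lemma \ref{Lem-s} (i), every $\psi\in\mathcal{N}(L(0))$ consists only of exponentially decaying modes. Here we use that $|\alpha_0|=|\alpha|<k_p<k_s$, which follows from \eqref{assu}. Hence the $n=0$ Rayleigh coefficients vanish, so $\psi_0=0$, and therefore $\langle f(0),\psi\rangle=\langle f'(0),\psi\rangle=0$ for all $\psi\in\mathcal{N}$.

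The orthogonal decomposition $V_{\rm per}=\mathcal{N}\oplus\mathcal{R}(L(0))$ from Lemma \ref{Lem-s} (ii) then gives $f(0),f'(0)\in\mathcal{R}(L(0))$ and $Pf(0)=Pf'(0)=0$.

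\emph{Part (ii).} Let $v\in\mathcal{N}$ with $\langle PL'(0)v,\psi\rangle=\langle L'(0)v,\psi\rangle=0$ for all $\psi\in\mathcal{N}$.

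\begin{itemize}
\item Compute $L'(0)$ from \eqref{L2}. The volume terms yield $-2i\omega\cos^2\theta\,v\cdot\overline\psi$, the derivative terms with prefactors $2\sqrt{\mu}\sin\theta$ and $(\lambda+\mu)\sin\theta/\sqrt{\mu}$, the term $2i\omega(\lambda+\mu)\sin^2\theta/\mu\, v_1\overline\psi_1$, and the boundary sum $2\pi i\sum_n\overline\psi_n\cdot W_n'(\omega)v_n$.
\item Extend $v,\psi$ to $\Omega_\infty$ by their Rayleigh expansions. Both decay exponentially.
\item Absorb the DtN derivative term into the integrals over $\Omega_b^\infty:=\Omega_\infty\setminus\Omega_b$, exactly as in \eqref{PL}. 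This uses that for exponentially decaying modes, $\omega$-differentiation of the DtN form equals the $\omega$-derivative of the sesquilinear form on the upper strip. This identification is only asserted as ``simple calculations'' in Lemma \ref{Lem3}, and I expect it to be the main technical obstacle.
\item I would verify it by observing that for $|\alpha_n|>k_s$ one has $\overline{\psi}_n\cdot W_n(\omega)v_n=\int_b^\infty(\ldots)$, the strip form, for all complex $\omega$ near the real one. Differentiating in $\omega$ along $\omega+i\epsilon$ commutes with the integral thanks to the uniform exponential decay.
\end{itemize}

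After this step, the constraint reads $\langle L'(0)v,\psi\rangle=\langle\mathbf{\widetilde B}v,\psi\rangle+2\omega\langle\mathbf{\widetilde C}v,\psi\rangle$, up to a factor $-i$ and a sign convention to be checked against \eqref{B2}--\eqref{C2}.

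Taking $\psi=v$ gives $\langle\mathbf{\widetilde B}v,v\rangle+2\omega\langle\mathbf{\widetilde C}v,v\rangle=0$. Since $v$ solves the homogeneous problem, $\mathbf{\widetilde A}v-\omega\mathbf{\widetilde B}v-\omega^2\mathbf{\widetilde C}v=0$, so pairing with $v$ and eliminating $\langle\mathbf{\widetilde B}v,v\rangle$ yields
\[
\langle\mathbf{\widetilde A}v,v\rangle+\omega^2\langle\mathbf{\widetilde C}v,v\rangle=0 .
\]
Positivity of $\mathbf{\widetilde A}$ and of $\mathbf{\widetilde C}$ was shown in Lemma \ref{Lem-s}; the latter is exactly where \eqref{assu} enters. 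Hence $v=0$, so $PL'(0)$ is one-to-one on $\mathcal{N}$, and by finite dimensionality of $\mathcal{N}$ it is an isomorphism.
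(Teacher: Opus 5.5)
Your proposal is correct and follows the paper's proof step for step. In part (i) you use Fredholmness together with Lemma~\ref{Lem-s}~(iii) for $\epsilon>0$, then observe that $f(0)$ and $f'(0)$ see only the zeroth Fourier coefficient on $\Gamma_b$, which vanishes on $\mathcal{N}$. Your explicit appeal to $|\alpha|<k_p$ from \eqref{assu} at this point is a detail the paper leaves implicit. In part (ii) you use $\langle L'(0)v,v\rangle=-i\bigl(\langle\mathbf{\widetilde B}v,v\rangle+2\omega\langle\mathbf{\widetilde C}v,v\rangle\bigr)$, combine it with $\mathbf{\widetilde A}v-\omega\mathbf{\widetilde B}v-\omega^2\mathbf{\widetilde C}v=0$, and conclude by positivity, exactly as the paper does.

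One caution concerns your sketch of the DtN-derivative identity, which the paper also takes for granted. Differentiating the strip integral also hits the $\omega$-dependent extension of $v$. That extra term does vanish, by Green's formula: the derivative of the extension has zero trace on $\Gamma_b$, $\psi$ solves the Navier equation at real $\omega$ and decays, and the form is Hermitian there. For dominated convergence, base the bound on the fixed exponential decay of $\psi$ rather than on uniform decay of the extended $v$. This also covers the P-evanescent modes with $k_p<|\alpha_n|<k_s$ that your sketch omits.
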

\begin{proof}
By the definitions \eqref{f2} and \eqref{L2}, we have
\begin{equation*}
\langle f'(\epsilon),\psi\rangle=-2i\tau_{4}\left(i+\frac{\omega+i\epsilon}{\sqrt{\mu}} b\cos\theta\right) e^{-ib(\omega+i\epsilon)\cos\theta/\sqrt{\mu}} \int_{0}^{2\pi}  \left(\begin{array}{c}
		\tau_{3} \\  \sin\theta		\end{array}\right)\cdot\overline{\psi(x_{1},b)}\,\mathrm{d}x_{1},
\end{equation*}
where $\tau_3$ and $\tau_4$ are defined in \eqref{tau34}, and
\begin{align*}
\langle L'(\epsilon)v,\psi\rangle=&\int_{\Omega_b}\left( -2i(\omega+i\epsilon)\cos^{2}\theta\, v \cdot \overline{\psi}+2\sqrt{\mu}\sin\theta\frac{\partial v}{\partial x_{1}}\cdot\overline{\psi}\right) \mathrm{d}x  \\
		&+\frac{(\lambda+\mu)\sin\theta}{\sqrt{\mu}} \int_{\Omega_b}\left[\left(\frac{\partial v_{2}}{\partial x_{2}}+2\frac{\partial v_{1}}{\partial x_{1}}\right)\overline{\psi}_{1}+ \frac{\partial v_{1}}{\partial x_{2}}\overline{\psi}_{2}\right] \mathrm{d}x  \\
		&+\frac{(\lambda+\mu)\sin^{2}\theta}{\mu}2i(\omega+i\epsilon) \int_{\Omega_b}v_{1}\overline{\psi}_{1}\mathrm{d}x +2\pi i\sum_{n\in\mathbb{Z}} \overline{\psi}_{n} \cdot \left(W_n'(\omega+i\epsilon) v_n\right).
\end{align*}
				
(i) By Lemma \ref{Lem-p} (iii) and the Fredholmness of the operator $L(\epsilon)$, the operator $L(\epsilon)$ must be invertible and thus $f(\epsilon)\in \mathcal{R}(L(\epsilon))$ for all $\epsilon>0$. On the other hand, we have $f(0)\in \mathcal{R}(L(0))$, because by Lemma \ref{Lem-p} (i) and (ii), $f(0)$ is orthogonal to $\mathcal{N}(L_{\omega})$ and $V_{{\rm per}}(\Omega_b)$ admits the orthogonal decomposition $V_{{\rm per}}(\Omega_b)=\mathcal{N}(L(0)) \oplus \mathcal{R}(L(0))$.
				
Since $(I-P)\phi$ is orthogonal to $\mathcal{N}(L(0))$ for any $\phi\in V_{{\rm per}}(\Omega_b)$, it follows that
\begin{equation*}
\langle Pf(0),\psi\rangle=\langle f(0),\psi\rangle=-2i\tau_{4}\omega e^{-ib\omega\cos\theta/\sqrt{\mu}} \int_{0}^{2\pi}  \left(\begin{array}{c}
\tau_{3} \\  \sin\theta
\end{array}\right)\cdot\overline{\psi(x_{1},b)}\,\mathrm{d}x_{1}=0,
\end{equation*}
and
\begin{align*}
&\langle Pf'(0),\psi\rangle =\langle f'(0),\psi\rangle  \\ =&-2i\tau_{4}\left(i+\frac{\omega}{\sqrt{\mu}} b\cos\theta\right) e^{-ib\omega\cos\theta/\sqrt{\mu}} \int_{0}^{2\pi}  \left(\begin{array}{c}
	\tau_{3} \\  \sin\theta
\end{array}\right)\cdot\overline{\psi(x_{1},b)}\,\mathrm{d}x_{1}=0,
\end{align*}
for all $\psi\in \mathcal{N}(L(0))$. This implies that $f(0), f'(0)\in\mathcal{R}(L(0))$ and $Pf(0)=Pf'(0)=0$.
				
(ii) We continue to compute
\begin{align}\nonumber
&\langle PL'(0)v,\psi\rangle=\langle L'(0)v,\psi\rangle  \\ \nonumber
=&\int_{\Omega_\infty}\left( -2i\omega\cos^{2}\theta\, v \cdot \overline{\psi}+2\sqrt{\mu}\sin\theta\frac{\partial v}{\partial x_{1}}\cdot\overline{\psi}\right) \mathrm{d}x+\frac{(\lambda+\mu)\sin^{2}\theta}{\mu}2i\omega \int_{\Omega_\infty}v_{1}\overline{\psi}_{1}\mathrm{d}x  \\ \label{or-shear}
	&+\frac{(\lambda+\mu)\sin\theta}{\sqrt{\mu}} \int_{\Omega_\infty}\left[\left(\frac{\partial v_{2}}{\partial x_{2}}+2\frac{\partial v_{1}}{\partial x_{1}}\right)\overline{\psi}_{1}+ \frac{\partial v_{1}}{\partial x_{2}}\overline{\psi}_{2}\right] \mathrm{d}x ,
\end{align}
for all $v,\psi\in \mathcal{N}(L(0))$.
				
Now assume that $\langle PL'(0)v,\cdot\rangle=0$ for some $v\in \mathcal{N}(L(0))$. Then
\begin{align*}
\langle PL'(0)v,v\rangle=&\int_{\Omega_\infty}\left(-2i\omega\cos^{2}\theta\, |v|^{2} +2\sqrt{\mu}\sin\theta\frac{\partial v}{\partial x_{1}}\cdot\overline{v}\right) \mathrm{d}x+\frac{(\lambda+\mu)\sin^{2}\theta}{\mu}2i\omega \int_{\Omega_\infty}|v_{1}|^{2}\mathrm{d}x  \\
	&+\frac{(\lambda+\mu)\sin\theta}{\sqrt{\mu}} \int_{\Omega_\infty}\left[\left(\frac{\partial v_{2}}{\partial x_{2}}+2\frac{\partial v_{1}}{\partial x_{1}}\right)\overline{v}_{1}+ \frac{\partial v_{1}}{\partial x_{2}}\overline{v}_{2}\right] \mathrm{d}x=0 .
\end{align*}
Recalling the definitions of $\mathbf{\widetilde{A}}$, $\mathbf{\widetilde{B}}$ and $\mathbf{\widetilde{C}}$ in \eqref{A2}-\eqref{C2}, the above relation can be rewritten as
\begin{equation*}
\langle \mathbf{\widetilde{B}}v,v\rangle+2\omega\langle \mathbf{\widetilde{C}}v,v\rangle=0.
\end{equation*}
Since $v\in\mathcal{N}(L(0))$ is a solution of the homogeneous problem, we have $\mathbf{\widetilde{A}}v-\omega \mathbf{\widetilde{B}}v-\omega^{2}\mathbf{\widetilde{C}}v=0$. Hence,
\begin{equation*}
\langle \mathbf{\widetilde{A}}v,v\rangle+\omega^{2}\langle \mathbf{\widetilde{C}}v,v\rangle=0.
\end{equation*}
Since $\mathbf{\widetilde{A}}$ and $\mathbf{\widetilde{C}}$ are positive operators, we obtain that $v=0$, which proves that $PL'(0)$ is one-to-one on $\mathcal{N}(L(0))$.
\end{proof}
Like the case of an incident pressure plane wave, the integral identity \eqref{or-shear} will yield an additional constraint condition to ensure uniqueness. Next we transform this identity to quasi-periodic settings by  substituting $v:=e^{-ik_{s}\sin\theta x_{1}}u$. It then follows that
\begin{align*}
&\int_{\Omega_\infty}(-2i\omega\cos^{2}\theta\, u\cdot \overline{\varphi} +2\sqrt{\mu}\sin\theta\frac{\partial u}{\partial x_{1}}\cdot\overline{\varphi}-2i\omega\sin^{2}\theta u\cdot \overline{\varphi} ) \mathrm{d}x +\frac{(\lambda+\mu)\sin^{2}\theta}{\mu}2i\omega\int_{\Omega_\infty}u_{1}\overline{\varphi}_{1} \mathrm{d}x \\ &+\frac{(\lambda+\mu)\sin\theta}{\sqrt{\mu}} \int_{\Omega_\infty}\left[\left(\frac{\partial u_{2}}{\partial x_{2}}+2\frac{\partial u_{1}}{\partial x_{1}}\right)\overline{\varphi}_{1}+ \frac{\partial u_{1}}{\partial x_{2}}\overline{\varphi}_{2}-2i\omega\frac{\sin\theta}{\sqrt{\mu}}u_{1}\overline{\varphi}_{1} \right] \mathrm{d}x=0,
\end{align*}
that is,
\begin{equation*}
\frac{\sin\theta}{\sqrt{\mu}}\int_{\Omega_\infty} \left\{2\mu\frac{\partial u}{\partial x_{1}}\cdot\overline{\varphi}+(\lambda+\mu) \left[\left(\frac{\partial u_{2}}{\partial x_{2}}+2\frac{\partial u_{1}}{\partial x_{1}}\right)\overline{\varphi}_{1}+ \frac{\partial u_{1}}{\partial x_{2}}\overline{\varphi}_{2}\right] \right\} \mathrm{d}x=2i\omega\int_{\Omega_\infty}u\cdot\overline{\varphi}\,\mathrm{d}x.
\end{equation*}
Multiplying $k_s$ on both sides and using $\alpha=k_s\sin\theta$, we then obtain the same integral identity as \eqref{integral-p}. Finally, combining Lemmas \ref{Lem-s}, \ref{lem5} and \ref{LAP} gives the limiting absorption principle for the incident shear plane wave as follows.
\begin{theorem}\label{THS}
Let $\omega>0$ be fixed and $\theta\in(-\pi/2,\pi/2)$ be an arbitrary incident direction. Set $\alpha=k_{s}\sin\theta$ and suppose that $|\alpha+n|\neq k_{p}$, $|\alpha+n|\neq k_{s}$ for any $n\in\mathbb{Z}$. There exists a unique solution $u\in [H_{{\rm loc}, \alpha,0}^{1}(D) ]^{2}$ such that $u^{\rm sc}:=u-u^{\rm in}$ satisfies the $\alpha$-quasiperiodic Rayleigh expansion if $\alpha$ is not a propagative wave number. At a propagative wavenumber, there still exists a unique solution if we additionally enforce the following constraint condition
\begin{equation}\label{integral-s}
\alpha\,\int_{\Omega_\infty} \left\{2\mu\frac{\partial u}{\partial x_{1}}\cdot\overline{\varphi}+(\lambda+\mu) \left[\left(\frac{\partial u_{2}}{\partial x_{2}}+2\frac{\partial u_{1}}{\partial x_{1}}\right)\overline{\varphi}_{1}+ \frac{\partial u_{1}}{\partial x_{2}}\overline{\varphi}_{2}\right] \right\} \mathrm{d}x=2i\omega^2\int_{\Omega_\infty}u\cdot\overline{\varphi}\,\mathrm{d}x,
\end{equation}
for all $\varphi\in[H_{{\rm loc}, \alpha,0}^{1}(D) ]^{2}$ such that $e^{-ik_{s}\sin\theta\, x_{1}}\varphi \in \mathcal{N}$.
\end{theorem}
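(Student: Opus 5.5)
Theorem~\ref{THS} will be proved by the same route as Theorem~\ref{THP}, now with Lemmas~\ref{Lem-s} and \ref{lem5} as inputs.

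\emph{Existence.} By Lemma~\ref{Lem-s}(i),(ii), $\mathcal{N}(L_\omega)=\mathcal{N}(L_\omega^*)$ and $V_{\rm per}(\Omega_b)=\mathcal{N}\oplus\mathcal{R}(L_\omega)$ is an orthogonal decomposition. Lemma~\ref{lem5}(i) gives $f(0)\perp\mathcal{N}$, since $\int_0^{2\pi}\psi(x_1,b)\,dx_1=0$ for every evanescent $\psi\in\mathcal{N}$. Hence $f(0)\in\mathcal{R}(L_\omega)$, and the Fredholm alternative yields some $v$ with $L_\omega v=f_\omega$. Then $u=e^{i\alpha x_1}v+u^{\rm in}$ on $\Omega_b$, continued upward by the Rayleigh series \eqref{a4}, lies in $[H^1_{{\rm loc},\alpha,0}(D)]^2$ and satisfies the Rayleigh expansion. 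If $\alpha$ is not a propagative wavenumber, then $\mathcal{N}=\{0\}$, $L_\omega$ is invertible, and uniqueness is immediate.

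\emph{Propagative case.} Here we want the solution selected by Lemma~\ref{LAP}. Its hypotheses hold: $L(\epsilon)=I-K(\epsilon)$ with $K(\epsilon)$ compact and $C^1$ in $\epsilon$, because $W_n$ is analytic near $\omega$ by Assumption~\ref{assumption} and $\mathscr{T}_\omega-\mathscr{T}_{\omega,1}$ has finite rank; $L(\epsilon)$ is injective for $\epsilon>0$ by Lemma~\ref{Lem-s}(iii); the Riesz number is one by Lemma~\ref{Lem-s}(ii); and $PL'(0)|_{\mathcal{N}}$ is injective by Lemma~\ref{lem5}(ii), hence an isomorphism since $\dim\mathcal{N}<\infty$. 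The limit $v(0)$ then satisfies $L(0)v(0)=f(0)$ and $PL'(0)v(0)=Pf'(0)=0$. Because $\mathcal{N}$ consists of evanescent modes, we may write $\langle L'(0)v(0),\psi\rangle$ for $\psi\in\mathcal{N}$ as the integral over $\Omega_\infty$ in \eqref{or-shear}. Substituting $v=e^{-i\alpha x_1}u$ and multiplying by $k_s$ then gives \eqref{integral-s}, exactly as computed before the theorem.

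\emph{Uniqueness.} Suppose two solutions both satisfy \eqref{integral-s}. Their difference $w$ satisfies the homogeneous problem and the Rayleigh condition, so $v=e^{-i\alpha x_1}w\in\mathcal{N}$. The constraint is linear in $u$, so $\langle L'(0)v,\psi\rangle=0$ for all $\psi\in\mathcal{N}$, i.e.\ $PL'(0)v=0$, and Lemma~\ref{lem5}(ii) forces $v=0$.

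\emph{Main obstacle.} The delicate step is Lemma~\ref{lem5}(ii) together with Lemma~\ref{Lem-s}(iii): both rely on positivity of $\widetilde{\mathbf C}$, which holds only under \eqref{assu}. The theorem is stated for an arbitrary $\theta$, however. I would handle this by noting that injectivity only needs $\langle\widetilde{\mathbf A}v,v\rangle+\omega^2\langle\widetilde{\mathbf C}v,v\rangle>0$ on the finite-dimensional $\mathcal{N}$. In the quasi-periodic variables this quantity is a combination of $\int E(u,\bar u)$ and $\omega^2\int|u|^2$, which I expect to be positive without \eqref{assu}. If that fails, I would restrict the statement to angles satisfying \eqref{assu}.
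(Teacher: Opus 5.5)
Your argument is the paper's own route: existence from the Fredholm alternative with $f(0)\perp\mathcal N$, the limit and the constraint from Lemma~\ref{LAP} with Lemmas~\ref{Lem-s} and~\ref{lem5} as inputs, and uniqueness by putting the difference of two constrained solutions into $\mathcal N$ and applying Lemma~\ref{lem5}(ii), as in the proof of Theorem~\ref{THP}. There is one slip. The unknown in \eqref{var0} is the total field, so $u=e^{i\alpha x_1}v$ in $\Omega_b$, and only $u-u^{\rm in}$ is continued by \eqref{a4}. With your $u=e^{i\alpha x_1}v+u^{\rm in}$ you would get $u=u^{\rm in}\neq 0$ on $\Lambda$.

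You are right that ``arbitrary incident direction'' is not supported. The paper imposes \eqref{assu} at the start of Subsection~\ref{subsec:shear} and proves the theorem only under it, so your fallback is exactly the paper's setting. Your proposed way of dropping \eqref{assu} does not work, however. For $v\in\mathcal N$ and $u=e^{i\alpha x_1}v$ one computes
\[
\langle\widetilde{\mathbf A}v,v\rangle+\omega^2\langle\widetilde{\mathbf C}v,v\rangle=\int_{\Omega_\infty}\big(E(u,\overline{u})+\omega^2|u|^2\big)\,\mathrm{d}x+i\alpha\, S(u,u),
\]
where $S(u,\varphi)$ is the integral multiplied by $\alpha$ on the left of \eqref{integral-s}.
\begin{itemize}
\item The quantity is not a combination of $\int E(u,\overline{u})$ and $\int|u|^2$ alone. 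The cross term $i\alpha S(u,u)$ is real but not sign-definite.
\item This cross term is precisely what the constraint fixes. Substituting $i\alpha S(u,u)=-2\omega^2\int_{\Omega_\infty}|u|^2\,\mathrm{d}x$ turns the right-hand side into $\int_{\Omega_\infty}(E(u,\overline{u})-\omega^2|u|^2)\,\mathrm{d}x$, which vanishes for every element of $\mathcal N$.
\item So the quasi-periodic variables carry no sign information. The contradiction in Lemma~\ref{lem5}(ii) comes only from $\widetilde{\mathbf A}>0$ and $\widetilde{\mathbf C}>0$ in the periodic variables.
\end{itemize}

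Moreover, \eqref{assu} is equivalent to $|\alpha|<k_p$, and it is used in places your fix does not reach:
\begin{itemize}
\item It gives ${\rm Im}\,\beta_n>0$ in Lemma~\ref{Lem-s}(iii).
\item The pencil argument in Lemma~\ref{Lem-s}(iii) needs a self-adjoint square root of $\widetilde{\mathbf C}$, i.e.\ $\widetilde{\mathbf C}\ge 0$ on all of $V_{\rm per}(\Omega_\infty)$, not just on $\mathcal N$.
\item It is what makes $\int_0^{2\pi}\psi(x_1,b)\,\mathrm{d}x_1=0$ for $\psi\in\mathcal N$ in your own orthogonality step.
\end{itemize}
For pressure incidence $|\alpha|=k_p|\sin\theta|<k_p$ holds automatically, which is why Theorem~\ref{THP} needs no such restriction. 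So state and prove the theorem under \eqref{assu}; beyond it, neither your argument nor the paper's applies.
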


\vspace{0.8em}
Finally, we end up this paper with several remarks.

\begin{remark}\label{rem2}
Consider a mixed incident plane wave of the form
$$u^{{\rm in}}=c_p\,u^{{\rm in}}_p + c_s\,u^{{\rm in}}_s,\quad  |c_p|+|c_s|\neq 0. $$
Suppose that $\sin^{2}\theta<\frac{\mu}{\lambda+2\mu}$ and
\ben
|k_p\sin\theta+n|\neq k_{p},\quad |k_p\sin\theta+n|\neq k_{s}, \\
|k_s\sin\theta+n|\neq k_{p},\quad |k_s\sin\theta+n|\neq k_{s},
\enn
for all $n\in \Z$. Then there exists a unique $k_p\sin\theta$-quasiperiodic solution $u_p$ given by Theorem \ref{THP} and corresponding to $u^{{\rm in}}_p$ and  a unique $k_s\sin\theta$-quasiperiodic solution $u_s$ given by Theorem \ref{THS} and corresponding to $u^{{\rm in}}_s$. Hence, $u:=c_p u_p + c_s u_s$ is the unique solution under the additional constraint conditions \eqref{integral-p} and \eqref{integral-s} enforcing on $u_p$ and $u_s$.  Notice that $u_p$ and $u_s$ have different phase shifts.
\end{remark}

\begin{remark}\label{rem3}
All results of this paper carry over to the case of the cavity (Neumann) boundary condition and other transmission problems in linear elasticity. One needs necessarily to change the variational space and in the transmission case that the definition of $\Omega_\infty$. In addition, our arguments extend to  bi-periodic structures in three dimensions as well. We omit the detailed analysis for brevity.
\end{remark}

\begin{remark}
The orthogonality conditions \eqref{integral-s} and \eqref{integral-p} furnish a criterion for the numerical determination of the coefficients $c_j$
in the general solution form \eqref{general} of our diffraction problem at a propagating wavenumber. These, together with the Rayleigh expansion condition \eqref{a4}, constitute a sharp radiation condition that guarantees the well-posedness of the diffraction model.	
	\end{remark}
					
\vspace{0.8em}

\textbf{Acknowledgement}
\vspace{0.5em}

The work of G.H. Hu is supported by the National Natural Science Foundation of China (No. 12425112), the Fundamental Research Funds for Central Universities in China (No. 050-63263073) and the Natural Science Foundation of Tianjin (No. 25JCZDJC00970). The work of J.L. Xiang is supported by the Natural Science Foundation of China (No. 12301542) and the Open Research Fund of Hubei Key Laboratory of Mathematical Sciences (Central China Normal University, MPL2025ORG017).

\end{document}